\newtheorem{fact}[theorem]{Fact}
\def\RR{{\mathbb{R}}}
\def\va{\mathbf{a}}
\def\vb{\mathbf{b}}
\def\vg{\mathbf{g}}
\def\vh{\mathbf{h}}
\def\NN{{\mathbb{N}}}
\def\mcF{{\mathcal{F}}}
\def\dd{\delta}
\newcommand{\argmin}{{\rm arg}\!\min}
\newcommand{\co}{\mathrm{conv}}
\begin{document}

\title{An algorithm for best generalised rational approximation of continuous functions}

\author{R. D\'iaz Mill\'an  \and Nadezda Sukhorukova \and Julien Ugon}

\institute{
R. D\'iaz Mill\'an \at School of Information Technology, Deakin University, Geelong, Australia, E-mail: \href{mailto://r.diazmillan@deakin.edu.au}{r.diazmillan@deakin.edu.au}.
\and
Nadezda Sukhorukova \at Swinburne University of Technology, Hawthorn, Australia, E-mail: \href{mailto://nsukhorukova@swin.edu.au}{nsukhorukova@swin.edu.au}. 
\and
Julien Ugon \at School of Information Technology, Deakin University, Geelong, Australia, E-mail: \href{mailto://julien.ugon@deakin.edu.au}{julien.ugon@deakin.edu.au}.
}

\date{}

% The correct dates will be entered by the editor

\maketitle
\begin{abstract}
    The motivation of  this paper is the development of an optimisation method for solving optimisation problems appearing in Chebyshev rational and generalised rational approximation problems, where the approximations are constructed as ratios of linear forms (linear combinations of basis functions). The coefficients of the linear forms are subject to optimisation and the basis functions are continuous function. It is known that the objective functions in generalised rational approximation problems are quasi-convex. In this paper we also prove a stronger result, the objective functions are pseudo-convex in the sense of Penot and Quang. Then we develop numerical methods, that are efficient for a wide range of pseudo-convex functions and test them on generalised rational approximation problems.  
\keywords{Chebyshev generalised rational approximation, pseudo-convex functions, point-to-set operators.}
\subclass{
 90C25 \and  % Convex programming
 90C26 \and % Non-convex programming and global optimization
 90C90 \and               % Applications of mathematical programming
 90C47 \and             %minimax
 65D15 \and               % Algorithms for functional approximation
%\sep 68T99              % Artificial intelligence, general
 65K10}   % ??  % Optimization and variational techniques
\end{abstract}

\section{Introduction}
Consider the set of all real-valued polynomials with degree up to $n$, denoted by~$\Pi_n$, and the continuous function $f:\RR\rightarrow \RR$. We are interested in the problem of approximating the function $f$ by a rational function $\frac{p}{q}$ where $p\in \Pi_n$ and $q\in \Pi_m$ for some given nonnegative numbers $n\geq0$ and $m\geq 0$. In others words we want to solve the optimisation problem:
\begin{equation}\label{problempol}
 \min_{p\in\Pi_n, q\in\Pi_m} \sup_{t\in I}\left|f(t)-\frac{p(t)}{q(t)}\right|.
\end{equation}
Problem~(\ref{problempol}) is also known as Chebyshev rational approximation problem. 
 We represent the polynomial $p\in\Pi_n$ by an element in $\RR^{n+1}$ by 
 $$p(t)=\langle \va,\textbf{t}_n\rangle =a_0+a_1t+a_2t^2\cdots + a_nt^n,$$ where $\va=(a_0,a_1,a_2,\ldots,a_n)\in \RR^{n+1}$ are the coefficient of the polynomial, and the vector the vector $\textbf{t}_n=(1,t,t^2,\ldots,t^n)\in\RR^{n+1}$ for each $t\in I$, $I\subset \RR$ being any compact subset of $\RR$.
 Now Problem \eqref{problempol} can be written as:
 \begin{equation}\label{problem1pol}
  \min_{(\va,\vb)\in C} \Psi^f_{\Pi}(\va,\vb),
 \end{equation}
where $\Psi^f_{\Pi}(\va,\vb)=\sup_{t\in I}\left|f(t)-\frac{\langle\va,\textbf{ t}_{n}\rangle}{\langle\vb,\textbf{ t}_{m}\rangle}\right|$ is a maximal deviation for the approximation of $f$ on the interval $I$, and 
$$C=\{(\va,\vb)\in \RR^{n+1}\times \RR^{m+1}: \langle \vb, \textbf{ t}_m \rangle \geq 1, \forall t\in I\}$$ is a feasible set.

%\begin{remark}
%For simplicity, in the rest of this paper, we consider that $I$ as an interval. However, our results apply whenever $I$ is a compact subset of~$\RR$.
%\end{remark}

Rational approximation was a popular research topic in 50s-60s of the twentieth century~\cite{Achieser1965, CheneyLoeb,Ralston1965Reme,Rivlin1962} as a promising alternative to the free knot spline approximation. Rational approximation models combine simplicity and significant flexibility, two properties attractive for practical applications~\cite{Blairnumintegration, AMCPeirisSukhSharonUgon}.

In~\cite{cheney1964generalized} Cheney and Loeb demonstrated that some of the results in the area of Chebyshev rational approximation can be extended to approximation by a ratio of linear forms
\begin{equation}\label{eq:ratiolinearforms}
    \frac{G(\va,t)}{H(\vb,t)}=\frac{a_0g_0(t)+a_1g_1(t)+a_2g_2(t)\cdots + a_ng_n(t)}{b_0h_0(t)+b_1h_1(t)+b_2h_2(t)\cdots + b_mh_m(t)}=\frac{\langle \va,\textbf{ g}(t)\rangle}{\langle \vb,\textbf{ h}(t)\rangle},
\end{equation}
where $g_i(t)$, $i=0,\dots,n$ and $h_j(t), j=0,\dots, m$ are not limited to monomials. The authors call this type of approximation \textit{generalised rational approximation}. There are a number of ways to generalise rational approximations. In the current paper we use the same terminology as in~\cite{cheney1964generalized} and therefore we approximate continuous functions by the ratios of linear forms and the coefficients of these forms are subject to optimisation. It is still required for the linear form in the denominator to be positive. 

These extensions of the results are possible due to the fact that the corresponding objective functions in the optimisation problems are quasi-convex. We will talk about this property in Section~\ref{sec:preliminary}. In this paper we also prove a stronger result, the objective functions are pseudo-convex in the sense of Penot and Quang (which extends the notion of pseudo-convexity to the case of nonsmooth functions, see~\cite{aussel,aris,PenotQuang}). Then we develop numerical methods, that are efficient for a wide range of pseudo-convex functions and test them on generalised rational approximation problems.  

In the case of generalised rational approximation, the optimisation problem:
\begin{equation}\label{problem}
 \min_{\va,\vb} \sup_{t\in I}\left|f(t)- \frac{\langle \va,\textbf{ g}(t)\rangle}{\langle \vb,\textbf{ h}(t)\rangle}\right|,
\end{equation}
 where $\va=(a_0,a_1,a_2,\ldots,a_n)\in \RR^{n+1}$ and  $\vb=(b_0,b_1,\ldots,b_m)\in\RR^m$
 %and $\textbf{t}_k=(1,t,t^2,\ldots,t^k)\in\RR^{k+1}$ for all $k\in\NN$, 
 are the decision variables and also the coefficients of the linear forms, $g_i(t)$, $i=0,\dots,n$ and $h_j(t)$, $j=0,\dots,m$ are called the basis functions, $t\in I$, $I\subset \RR$ any compact subset of $\RR$.
 This can be written as:
 \begin{equation}\label{problem1}
  \min_{(\va,\vb)\in C} \Psi^f(\va,\vb),
 \end{equation}
where $\Psi^f(\va,\vb)=\sup_{t\in I}\left|f(t)- \frac{\langle \va,\textbf{ g}(t)\rangle}{\langle \vb,\textbf{ h}(t)\rangle}\right|$ is a maximal deviation for the approximation of $f$ on the interval $I$, and $$C=\{(\va,\vb)\in \RR^{n+1}\times \RR^{m+1}: \langle \vb,\textbf{ h}(t)\rangle \geq 1, \forall t\in I\}$$ is the feasible set.

The manuscript is organized as follows. In Section~2 we introduce notation and preliminaries. In Section~3 we introduce essential results related to the approximation problem. Section~4 is dedicated to developing an algorithm for solving a variational inequality which is equivalent to solve the approximation problem. In Section~5 we present a general algorithm to solve non-monotone variational inequalities for point-to-set operators. Section~6 shows some numerical experiments to demonstrate the behaviuor of the presented algorithm. Some conclusion remarks are presented in Section~7. 

\section{Preliminary results}\label{sec:preliminary}

%Some notations...
This section is devoted to 
%describe 
some classic notation, definition and results we will use along to the present work.

By $\RR^n$ we denote the $n$ dimensional Euclidean space, $|\cdot|$ is the absolute value function, $\|\cdot\|$ the norm induced by the inner product $\langle \cdot,\cdot \rangle$. The set $\co D$ is the convex hull of the set $D$. The orthogonal projection of a point $x\in\RR^n$, onto the convex, closed and non-empty set $C\subset \RR^n$, is defined by the unique point in $C$, which is the solution of the minimal distance problem:
 $$P_C(x)=\argmin_{y\in C}\|x-y\|.$$
Given a point-to-set the operator $T:\RR^n\rightrightarrows \RR^n$, the graph of $T$, is denoted and defined by $Gr(T):=\{(x,u)\in \RR^n\times\RR^n: u\in T(x)\}$. 
\begin{fact}\label{proj}
Let $C\subseteq \RR^n$ be closed, convex and non-empty. For all $x,y\in \RR^n$ and all $z\in C $, the following holds:
\begin{enumerate}
\item\label{proj-i} $\|P_C(x)-P_C(y)\|^2 \leq \|x-y\|^2-\|(x-P_C(x))-(y-P_C(y))\|^2.$
\item\label{proj-ii} $\langle x-P_C(x),z-P_C(x)\rangle \leq 0.$
\end{enumerate}
\end{fact}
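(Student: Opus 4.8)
The plan is to establish part~(\ref{proj-ii}) first, since it is the fundamental variational characterisation of the projection, and then to deduce part~(\ref{proj-i}) from it by a short algebraic argument. Throughout I will write $u=P_C(x)$ and $v=P_C(y)$.

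For part~(\ref{proj-ii}), I would exploit the defining property that $P_C(x)$ minimises $y\mapsto\|x-y\|^2$ over the convex set $C$. Fix $z\in C$; by convexity the segment point $(1-\lambda)P_C(x)+\lambda z$ lies in $C$ for every $\lambda\in[0,1]$, so the scalar function $\phi(\lambda)=\|(x-P_C(x))-\lambda(z-P_C(x))\|^2$ attains its minimum over $[0,1]$ at $\lambda=0$. Expanding the square gives $\phi(\lambda)=\phi(0)-2\lambda\langle x-P_C(x),z-P_C(x)\rangle+\lambda^2\|z-P_C(x)\|^2$. From $\phi(\lambda)\geq\phi(0)$ for small $\lambda>0$, dividing by $\lambda$ and letting $\lambda\to0^{+}$ yields $\langle x-P_C(x),z-P_C(x)\rangle\leq 0$, which is exactly the claimed inequality.

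For part~(\ref{proj-i}), I would apply part~(\ref{proj-ii}) twice. Taking the point $x$ with $z=v\in C$ gives $\langle x-u,\,v-u\rangle\leq 0$, i.e. $\langle x-u,\,u-v\rangle\geq 0$; taking the point $y$ with $z=u\in C$ gives $\langle y-v,\,u-v\rangle\leq 0$. Adding these two (equivalently, subtracting the second from the first) produces $\langle (x-u)-(y-v),\,u-v\rangle\geq 0$. Finally, using the identity $x-y=(u-v)+\big((x-u)-(y-v)\big)$ and expanding $\|x-y\|^2$, one finds $\|x-y\|^2-\|(x-u)-(y-v)\|^2=\|u-v\|^2+2\langle u-v,\,(x-u)-(y-v)\rangle$, and the nonnegativity of the last inner product term delivers the desired bound $\|u-v\|^2\leq\|x-y\|^2-\|(x-u)-(y-v)\|^2$.

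The main obstacle is the proof of part~(\ref{proj-ii}): because the minimum of $\phi$ is attained at the endpoint $\lambda=0$ of the feasible interval rather than in its interior, only a one-sided condition is available, so one must take the limit from the right and track signs carefully to obtain the correct direction of the inequality. Once this variational inequality is secured, part~(\ref{proj-i}) reduces to the routine algebra sketched above.
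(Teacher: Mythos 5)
Your proof is correct, but it takes a different route from the paper in the sense that the paper offers no argument at all: it simply cites Proposition~4.8 and Theorem~3.14 of Bauschke--Combettes (reference \cite{librobauch} in the text), which are exactly the two items of the Fact (firm nonexpansiveness of $P_C$ and the variational characterisation of the projection). What you supply is a complete, self-contained derivation, and it is essentially the standard textbook one that underlies those cited results: part~(\ref{proj-ii}) via the one-sided first-order condition for the endpoint minimum of $\phi(\lambda)=\|(x-P_C(x))-\lambda(z-P_C(x))\|^2$ on $[0,1]$, which is valid since $(1-\lambda)P_C(x)+\lambda z\in C$ by convexity; then part~(\ref{proj-i}) by applying (\ref{proj-ii}) at $x$ with $z=P_C(y)$ and at $y$ with $z=P_C(x)$, adding to get $\langle (x-u)-(y-v),u-v\rangle\geq 0$ with $u=P_C(x)$, $v=P_C(y)$, and expanding $\|x-y\|^2$ along the decomposition $x-y=(u-v)+((x-u)-(y-v))$. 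All steps check out, including the care you take with the one-sided limit $\lambda\to 0^{+}$ (only $\phi(\lambda)\geq\phi(0)$ for $\lambda\in(0,1]$ is available, so only the inequality $\langle x-P_C(x),z-P_C(x)\rangle\leq 0$ follows, which is what is claimed). The trade-off is the usual one: the paper's citation keeps the exposition short and leans on a standard monograph, while your argument makes the result verifiable in place, uses nothing beyond the definition of $P_C$ and convexity of $C$, and in fact works verbatim in any Hilbert space, not just $\RR^n$. One pedantic point you may wish to note: your argument presupposes that $P_C(x)$ exists (uniqueness is not needed for either inequality), which in $\RR^n$ follows from closedness of $C$ and coercivity of the distance, and is implicitly granted by the paper's definition of $P_C$.
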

\begin{proof}
See \cite[Proposition 4.8 and Theorem 3.14]{librobauch}.
\end{proof}
One of the most useful tools in projection algorithms is the \emph{Fej\'er convergence}, defined by:
 \begin{definition}[Fej\'er convergence]
 Let $S$ be a nonempty subset of $\RR^n$. A sequence $(x^k)_{k\in\NN}\subset \RR^n$ is said to be Fej\'er convergent to $S$, if and only if, for all $x\in S$ there exists $k_0\in \NN$ such that $\|x^{k+1}-x\| \le \|x^k - x\|$ for all $k\ge k_0$.
 \end{definition} 
  The main properties that a Fej\'er convergent sequences satisfy are: 
  
\begin{proposition}\label{fejer}
If $(x^k)_{k\in \NN}$ is Fej\'er convergent to $S$, then it is bounded.
\end{proposition}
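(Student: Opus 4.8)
The plan is to prove boundedness directly from the defining Fejér inequality.

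The plan is to prove boundedness directly from the defining Fej\'er inequality, exploiting the fact that $S$ is nonempty. First I would fix an arbitrary point $\bar x\in S$; such a point exists precisely because $S$ is assumed nonempty, and this is the only structural property of $S$ that the argument requires. Applying the definition of Fej\'er convergence to this single point $\bar x$ yields an index $k_0\in\NN$ such that $\|x^{k+1}-\bar x\|\le\|x^k-\bar x\|$ for all $k\ge k_0$.

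The key observation is that the last inequality says the scalar sequence $\big(\|x^k-\bar x\|\big)_{k\ge k_0}$ is nonincreasing. A nonincreasing sequence of nonnegative reals is bounded above by its first term, so $\|x^k-\bar x\|\le\|x^{k_0}-\bar x\|$ for every $k\ge k_0$. From here the triangle inequality gives, for all $k\ge k_0$,
$$\|x^k\|\le\|x^k-\bar x\|+\|\bar x\|\le\|x^{k_0}-\bar x\|+\|\bar x\|,$$
which is a bound independent of $k$.

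It then remains to absorb the finitely many initial terms $x^0,\dots,x^{k_0-1}$, which are automatically bounded since they are finite in number; taking the maximum of this finite bound and the tail bound above produces a single constant dominating $\|x^k\|$ for all $k\in\NN$, establishing boundedness. I do not expect any genuine obstacle here: the result is essentially a one-line consequence of monotonicity of the distance sequence. The only point warranting care is that the index $k_0$ furnished by the definition depends on the chosen $\bar x$, so one must commit to a fixed $\bar x\in S$ at the outset rather than quantifying over $S$, and one must not overlook the finite prefix when claiming a global bound.
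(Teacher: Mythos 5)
Your proof is correct and takes essentially the same route as the paper's: fix $\bar x\in S$, note that the definition makes $\bigl(\|x^k-\bar x\|\bigr)_{k\ge k_0}$ nonincreasing, hence bounded by $\|x^{k_0}-\bar x\|$, and conclude boundedness of the sequence. You are in fact slightly more careful than the paper, which tacitly ignores the finitely many terms $x^0,\dots,x^{k_0-1}$ before the Fej\'er inequality kicks in, whereas you absorb that finite prefix explicitly.
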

\proof
By definition, taking $\bar{x}\in S$, we have $\|x^k-\bar{x}\|\leq \|x^n-\bar{x}\|$ for any $n\leq k$, then for all $k\in\NN$, $\|x^k-\bar{x}\|\leq \|x^{k_0}-\bar{x}\|$.
\endproof
 The concepts of monotonicity, pseudo-monotonicity and quasi-monotonicity are mentioned in this work. Now, we define these operators. 
 \begin{definition}
 A point-to-set operator $T:\RR^n\rightrightarrows \RR^n$ is called:
\begin{itemize}
\item[(i)] {\it Monotone}, if and only if, for all $(x,u), (y,v)\in Gr(T)$,
$$\langle u-v,x-y\rangle\geq 0.$$
\item [(ii)] {\it Pseudo-monotone}, if and only if, for all $(x,u), (y,v)\in Gr(T)$, the following implication holds:
$$\langle u,y-x\rangle\geq 0 \Longrightarrow \langle v, y-x\rangle\geq 0. $$
\item [(iii)] {\it Quasi-monotone}, if and only if, for all $(x,u), (y,v)\in Gr(T)$, the following implication holds:
$$\langle u,y-x\rangle > 0 \Longrightarrow \langle v, y-x\rangle\geq 0. $$
\end{itemize} 
 \end{definition}
 It is clear that every monotone operator is pseudo-monotone, and every pseudo-monotone operator is quasi-monotone.
\section{Approximation of continuous functions}
 This section is dedicated to analysing Problem~\eqref{problem1pol}. 

\begin{lemma}\label{quasicon}
For any real function $f:\RR\rightarrow \RR$ and a compact set $I\subseteq \RR$, the maximal deviation $\Psi^f:\RR^{n+1}\times\RR^{m+1}\rightarrow \RR$ is a quasi-convex function.
\end{lemma}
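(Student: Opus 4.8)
The plan is to exploit the fact that a pointwise supremum of quasi-convex functions is again quasi-convex, thereby reducing the problem to a single value of $t$. Throughout, the relevant domain is the feasible set $C$, on which $\langle\vb,\mathbf{h}(t)\rangle\ge 1>0$ for every $t$ and on which $\Psi^f$ is the finite-valued quantity of interest; note that $C$ is convex, being an intersection of half-spaces, so quasi-convexity is understood on this convex set.

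First I would recall the standard characterisation that a function is quasi-convex if and only if all of its sublevel sets are convex. For each fixed $t\in I$ write $\phi_t(\va,\vb)=\left|f(t)-\frac{\langle\va,\mathbf{g}(t)\rangle}{\langle\vb,\mathbf{h}(t)\rangle}\right|$, so that $\Psi^f=\sup_{t\in I}\phi_t$. Then for every real $\alpha$,
$$\{(\va,\vb)\in C:\Psi^f(\va,\vb)\le\alpha\}=\bigcap_{t\in I}\{(\va,\vb)\in C:\phi_t(\va,\vb)\le\alpha\},$$
and since an intersection of convex sets is convex, it suffices to prove that each $\phi_t$ is quasi-convex on $C$.

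Next, fix $t\in I$ and abbreviate the constant $c=f(t)$ together with the affine functions $G(\va)=\langle\va,\mathbf{g}(t)\rangle$ and $H(\vb)=\langle\vb,\mathbf{h}(t)\rangle$, where $H(\vb)\ge 1$ on $C$. Because $H>0$, I can clear the denominator and write
$$\phi_t(\va,\vb)=\frac{|c\,H(\vb)-G(\va)|}{H(\vb)}.$$
The numerator is the absolute value of an affine function of $(\va,\vb)$, hence a nonnegative convex function, while the denominator is a positive affine function. The key step is then the elementary fact that such a ratio is quasi-convex: for $\alpha\ge 0$ the sublevel set $\{\phi_t\le\alpha\}$ coincides with $\{|c\,H-G|-\alpha H\le 0\}$, which is a sublevel set of the convex function $|c\,H-G|-\alpha H$ and therefore convex; for $\alpha<0$ the set is empty, since $\phi_t\ge 0$. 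Hence every $\phi_t$ has convex sublevel sets.

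Combining the two reductions yields convex sublevel sets for $\Psi^f$, that is, quasi-convexity. I expect the only genuinely delicate point to be the bookkeeping around the domain: one must restrict to $C$ so that the denominator stays positive and so that the clearing-of-denominators step is valid, since away from $C$ the ratio is neither well-signed nor continuous. The remaining ingredients — convexity of the absolute value of an affine map, convexity of intersections, and the sign split on $\alpha$ — are routine.
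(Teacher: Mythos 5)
Your proof is correct, and its skeleton is the same as the paper's: reduce to a fixed $t\in I$ via the supremum, then argue quasi-convexity pointwise. Where you genuinely differ is in how the per-$t$ step is executed. The paper simply asserts that $f(t)-\langle\va,\vg(t)\rangle/\langle\vb,\vh(t)\rangle$ is quasi-convex as a constant plus a ratio of linear functions, citing Boyd and Vandenberghe, and then invokes closure of quasi-convexity under suprema; as written, this is applied to $\sigma^f_t$ \emph{without} the absolute value, quietly skipping the fact that $|\cdot|$ of a quasi-convex function need not be quasi-convex --- what rescues the argument is that a linear-fractional function is quasi\emph{linear}, so $|\sigma^f_t|=\max(\sigma^f_t,-\sigma^f_t)$ is a maximum of two quasi-convex functions. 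Your clearing-of-denominators computation, which rewrites $\{\phi_t\le\alpha\}$ as the $0$-sublevel set of the convex function $|f(t)H-G|-\alpha H$ (with the trivial split for $\alpha<0$), handles the absolute value head-on, so your argument is both self-contained (it replaces the citation with the elementary sublevel-set proof underlying it) and patches this small gap in the paper's one-line justification. Your explicit restriction to the convex feasible set $C$, where $\langle\vb,\vh(t)\rangle\ge 1$ keeps the denominator positive and the manipulation valid, is likewise more careful than the lemma's statement, which nominally claims quasi-convexity on all of $\RR^{n+1}\times\RR^{m+1}$ even though $\Psi^f$ is only well-defined where the denominator is positive --- a point the paper's proof also implicitly assumes via the condition $\langle\vb,\vh(t)\rangle>0$.
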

\begin{proof}
For any $t\in I$ the function 
$f(t)-\frac{\langle \va,\vg(t)\rangle}{\langle \vb,\vh(t)\rangle}$, with
$$\begin{aligned}\vg(t)&=\left(g_0(t),g_1(t),\dots,g_n(t)\right),\\\vh(t)&=\left(h_0(t),h_1(t),\dots,h_m(t)\right),\end{aligned}$$ 
%$\bf{h(t)^0}=(h_0(t),h_1(t),h_2(t),\cdots,h_m(t)\in \RR^{m+1}$, 
${\langle \vb,\vh(t)\rangle}>0$
is a quasi-convex function, as the sum of a constant and the ratio of two linear functions~\cite{Boyd}. Then the function $\Psi^f$ is a quasi-convex function as a supremum of quasi-convex functions.
\end{proof}

Interestingly, this result is a direct corollary of \cite[Lemma~2]{cheney1964generalized}, but it was not elaborated by the authors and was left unnoticed for several decades. The most reasonable explanation for this is that most efficient techniques for quasi-convex optimisation were developed much later: around 20~years after~\cite{cheney1964generalized}.  

We denote by $A^+(\va,\vb)$ and $A^-(\va,\vb)$ the sets of actives values, i.e., 
$$
\begin{aligned}
A^+(\va,\vb) &=\left\{t\in I: \Psi^f(\va,\vb)=\sigma^f_t(\va,\vb)\right\}\\ A^-(\va,\vb) &=\left\{t\in I: \Psi^f(\va,\vb)=-\sigma^f_t(\va,\vb)\right\},\end{aligned}$$ where $\sigma^f_t(\va,\vb):=f(t)-\frac{\langle \va,\vg(t)\rangle}{\langle \vb,\vh(t)\rangle}$.

\begin{theorem}\label{clarke}
The Clarke subdifferential of the function $\Psi^f$ can be computed as follows:

\begin{equation}\label{subdifferential}
\partial \Psi^f(\va,\vb)=\co \left\{ \nabla \sigma^f_t(\va,\vb), -\nabla \sigma^f_l(\va,\vb): t\in A^+(\va,\vb), l\in A^-(\va,\vb) \right\}.
\end{equation}
\end{theorem}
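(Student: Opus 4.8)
The plan is to recognise $\Psi^f$ as the pointwise maximum of a compact family of continuously differentiable functions and then invoke the classical formula for the Clarke subdifferential of such a maximum function (as developed, e.g., in Clarke's \emph{Optimization and Nonsmooth Analysis}, Theorem~2.8.2). First I would absorb the absolute value into the index by doubling the parameter set: put $T=I\times\{-1,+1\}$, which is compact, and for $(t,s)\in T$ define $\phi_{(t,s)}(\va,\vb)=s\,\sigma^f_t(\va,\vb)$. Since $\left|\sigma^f_t\right|=\max\{\sigma^f_t,-\sigma^f_t\}$, we have $\Psi^f(\va,\vb)=\max_{(t,s)\in T}\phi_{(t,s)}(\va,\vb)$, and the maximum is attained because $I$ is compact and $\sigma^f_t$ is continuous in $t$.

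Next I would check the hypotheses of the max-function theorem at a feasible point $(\va,\vb)$ with $\langle\vb,\vh(t)\rangle\ge 1$ for all $t\in I$. On a neighbourhood of such a point the denominator stays bounded away from zero, so each $\phi_{(t,s)}$ is a ratio of affine maps with non-vanishing denominator and is therefore $C^1$ in $(\va,\vb)$; in particular each $\phi_{(t,s)}$ is Clarke regular. Because $\vg$ and $\vh$ are continuous and $I$ is compact, the gradient map $(t,s,\va,\vb)\mapsto\nabla_{(\va,\vb)}\phi_{(t,s)}(\va,\vb)$ is jointly continuous, the family is uniformly Lipschitz near the point, and $(t,s)\mapsto\phi_{(t,s)}(\va,\vb)$ is continuous. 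These are exactly the conditions under which the theorem yields equality, rather than mere inclusion:
\[
\partial\Psi^f(\va,\vb)=\co\Big\{\nabla_{(\va,\vb)}\phi_{(t,s)}(\va,\vb):(t,s)\in T(\va,\vb)\Big\},
\]
where $T(\va,\vb)=\{(t,s)\in T:\phi_{(t,s)}(\va,\vb)=\Psi^f(\va,\vb)\}$ is the active index set.

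It then remains to translate the active set into the notation of the statement. The pair $(t,+1)$ is active precisely when $\sigma^f_t(\va,\vb)=\Psi^f(\va,\vb)$, that is $t\in A^+(\va,\vb)$, and it contributes the gradient $\nabla\sigma^f_t(\va,\vb)$; the pair $(l,-1)$ is active precisely when $-\sigma^f_l(\va,\vb)=\Psi^f(\va,\vb)$, that is $l\in A^-(\va,\vb)$, and it contributes $-\nabla\sigma^f_l(\va,\vb)$. Substituting these into the displayed formula gives exactly~\eqref{subdifferential}. The degenerate case $\Psi^f(\va,\vb)=0$, in which $\sigma^f_t=0$ is simultaneously its own positive and negative part, is handled automatically: such $t$ lie in both $A^+(\va,\vb)$ and $A^-(\va,\vb)$, so both $\pm\nabla\sigma^f_t(\va,\vb)$ enter the convex hull, matching the subdifferential of $|\cdot|$ at $0$.

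The hard part will not be the bookkeeping but the verification that the hypotheses of the max-function theorem genuinely hold, and in particular that one obtains equality rather than the generic inclusion $\partial\Psi^f\subseteq\co\{\dots\}$. This is where the Clarke regularity of each smooth piece $\phi_{(t,s)}$ and the joint continuity of the gradient map are essential, the latter also ensuring that the convex hull on the right-hand side is already closed. The feasibility constraint $\langle\vb,\vh(t)\rangle\ge1$ is exactly what keeps the denominators uniformly bounded away from zero, so that these smoothness and continuity properties hold uniformly in $t$ over the compact set $I$.
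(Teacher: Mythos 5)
Your proposal is correct and follows essentially the same route as the paper, whose entire proof is a citation of the smooth max-function rule (Theorem~10.31 in Rockafellar--Wets); you invoke the equivalent result from Clarke's book after encoding the absolute value via the doubled index set $I\times\{-1,+1\}$. The only difference is that you explicitly verify the hypotheses --- the constraint $\langle\vb,\vh(t)\rangle\ge 1$ keeping denominators bounded away from zero, joint continuity of the gradients, and Clarke regularity of the $C^1$ branches giving equality rather than inclusion --- which the paper leaves implicit.
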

\begin{proof}
See Theorem 10.31 in \cite{rocke-wets}.
\end{proof}

In~\cite{bura-mill}, the authors introduce an algorithm for solving variational inequalities, when the operator is pseudo-convex, subject to some continuity requirements. 
In the following example we show that the subdifferential of the function $\Psi^f$ is not necessarily inner semi-continuous, and therefore does not satisfy the requirements from~\cite{bura-mill}.

\begin{example}

Consider the constant function $f:\mathbb{R}\rightarrow \mathbb{R}$ defined by $f(t)=0$, the polynomials $p(t)=2t^2-1$ and $q(t)=1$, and the compact set $I=[-1,1]$. Then $\va=(-1,0,2)$ $\vb=(1)$, $A^+ = \{0\}$ and $A^-=\{-1,1\}$. Therefore $\partial \Psi^f(\va,\vb)=\co \{(1,-1,1,-2),(-1,0,0,1),(1,1,1,-2)\}$ corresponding to the active points $t_0=-1$, $t_1=0$ and $t_2=1$. Now consider the sequence $(\va_n,\vb_n)=(-1,1/n,2,1)$ which converges to the point $(\va,\vb)=(-1,0,2,1)$. For all $n\in\mathbb{N}$, $A^+({\bf a}_n,{\bf b}_n) = \emptyset$ and $A^-({\bf a}_n,{\bf b}_n) = \{1\}$. That is:
 %$$\Psi^f(\va_n,\vb_n)=\sup_{t\in [-1,1]}\left|\frac{\langle\va_n,{\bf t}_2\rangle}{\langle\vb_n,{\bf t}_0\rangle}\right|=1+\frac{1}{n}.$$
  $$\Psi^f(\va_n,\vb_n)=\sup_{t\in [-1,1]}\left|-1 + \frac{t}{n} + 2t^2\right|=1+\frac{1}{n}.$$
  Then for all $n\in\mathbb{N}$ we have that $\partial \Psi^f(\va_n,\vb_n)=(1,1,1,-1-\frac{1}{n})$, which proves that there is no sequence of elements of $\partial \Psi^f(\va_n,\vb_n)$ converging to $y=(-1,0,0,1)\in \partial \Psi^f(\va,\vb)$. Therefore the operator $\partial \Psi^f$ is not {\it inner semi-continuous}. 
\end{example}
The following Lemma is a direct consequence of the continuity of the function $\Psi^f$. 
\begin{lemma}\label{continu}
Consider the converging sequence $(\va_n,\vb_n)_{n\in\NN}\subset C$, so that  $$\lim_{n\to \infty}(\va_n,\vb_n)=(\va,\vb)\in C.$$ Taking for all $n\in \NN$ an active value $t_n\in A^+(\va_n,\vb_n)\cup A^-(\va_n,\vb_n)$, for any converging subsequence $(t_{k_n})_{k_n\in \NN}\subseteq (t_n)_{n\in\NN}$, such that $\lim_{k_n\to \infty}t_{k_n}=\bar{t}$ we have that $\bar{t}\in A^+(\va,\vb)\cup A^-(\va,\vb)$.  
\end{lemma}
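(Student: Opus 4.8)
The plan is to reduce the statement to the joint continuity of the map $(\va,\vb,t)\mapsto \sigma^f_t(\va,\vb)$ together with the already-assumed continuity of $\Psi^f$, and then to pass to the limit along the subsequence. First I would observe that, by the very definitions of $A^+$ and $A^-$ and the fact that $\Psi^f\ge 0$, the membership $t_{k_n}\in A^+(\va_{k_n},\vb_{k_n})\cup A^-(\va_{k_n},\vb_{k_n})$ is equivalent to the single scalar identity
$$\Psi^f(\va_{k_n},\vb_{k_n})=\left|\sigma^f_{t_{k_n}}(\va_{k_n},\vb_{k_n})\right|,$$
so the whole proof amounts to passing to the limit in this equality. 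Since $I$ is compact and $t_{k_n}\in I$, the limit $\bar t$ also belongs to $I$, which guarantees that $\sigma^f_{\bar t}(\va,\vb)$, and hence $A^+(\va,\vb)$ and $A^-(\va,\vb)$, are well defined at the limit point.

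Next I would establish that $(\va,\vb,t)\mapsto \sigma^f_t(\va,\vb)=f(t)-\frac{\langle \va,\vg(t)\rangle}{\langle \vb,\vh(t)\rangle}$ is continuous on $C\times I$. The numerator $\langle \va,\vg(t)\rangle$ and the denominator $\langle \vb,\vh(t)\rangle$ are jointly continuous in their arguments, being inner products of the coefficient vectors with the continuous vector-valued maps $\vg$ and $\vh$; together with the continuity of $f$ this makes $\sigma^f_t$ continuous wherever the denominator does not vanish. The crucial point is that on the feasible set one has $\langle \vb,\vh(t)\rangle\ge 1$ for every $t\in I$, so the denominator is uniformly bounded away from zero and cannot degenerate in the limit. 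Consequently, from $(\va_{k_n},\vb_{k_n})\to(\va,\vb)$ and $t_{k_n}\to\bar t$ I obtain $\sigma^f_{t_{k_n}}(\va_{k_n},\vb_{k_n})\to \sigma^f_{\bar t}(\va,\vb)$, and therefore also $\left|\sigma^f_{t_{k_n}}(\va_{k_n},\vb_{k_n})\right|\to \left|\sigma^f_{\bar t}(\va,\vb)\right|$.

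Finally I would combine this with the continuity of $\Psi^f$, which yields $\Psi^f(\va_{k_n},\vb_{k_n})\to\Psi^f(\va,\vb)$. Passing to the limit in the displayed identity gives $\Psi^f(\va,\vb)=\left|\sigma^f_{\bar t}(\va,\vb)\right|$, that is, either $\Psi^f(\va,\vb)=\sigma^f_{\bar t}(\va,\vb)$ or $\Psi^f(\va,\vb)=-\sigma^f_{\bar t}(\va,\vb)$, which is precisely the assertion $\bar t\in A^+(\va,\vb)\cup A^-(\va,\vb)$. I expect the only genuinely delicate point to be the joint continuity of $\sigma^f$ along the diagonal, where the coefficients and the evaluation point move simultaneously; this is exactly where the lower bound $\langle \vb,\vh(t)\rangle\ge 1$ on $C$ is needed, since it prevents the denominator from collapsing as $(\va_{k_n},\vb_{k_n},t_{k_n})$ approaches $(\va,\vb,\bar t)$. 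Everything else is a routine limiting argument.
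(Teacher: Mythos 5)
Your proof is correct and follows essentially the same route as the paper: a limiting argument along the subsequence resting on the joint continuity of $(\va,\vb,t)\mapsto\sigma^f_t(\va,\vb)$, which you justify more explicitly than the paper does via the bound $\langle\vb,\vh(t)\rangle\ge 1$ on $C$. The only cosmetic difference is that you pass to the limit in the single equality $\Psi^f(\va_{k_n},\vb_{k_n})=\bigl|\sigma^f_{t_{k_n}}(\va_{k_n},\vb_{k_n})\bigr|$ using the continuity of $\Psi^f$ (which the paper itself invokes), whereas the paper passes to the limit in the inequalities $\bigl|\sigma^f_{\hat t}(\va_{k_n},\vb_{k_n})\bigr|\le\bigl|\sigma^f_{t_{k_n}}(\va_{k_n},\vb_{k_n})\bigr|$ for each fixed $\hat t\in I$ and concludes that $\bar t$ attains the supremum.
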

\begin{proof}
The existence of a convergent subsequence is due to the boundedness of the compact set $I$, from now on, suppose for simplicity that the sequence $(t_i)_{i\in\NN}$ is convergent to a point $\bar{t}\in I$.

By continuity of $f$ we have
\begin{equation}\label{cont}
\lim_{i\to\infty}\left|f(t_i)-\frac{\langle {\bf a}_i,\vg(t_i)\rangle}{\langle {\bf b}_i,\vh(t_i)\rangle}\right|=\left|f(\bar{t})-\frac{\langle {\bf a},{\vg(\bar{t})}\rangle}{\langle {\bf b},{\vh(\bar{t})}\rangle}\right|.
\end{equation}

Take any $\hat{t}\in I$. For all $i\in\NN$ we have
$$
\left|f(\hat{t})-\frac{\langle {\bf a}_i,{\vg(\hat{t})}\rangle}{\langle {\bf b}_i,{\vh(\hat{t})}\rangle}\right|\leq  \left|f(t_i)-\frac{\langle {\bf a}_i,\vg(t_i)\rangle}{\langle {\bf b}_i,\vh(t_i)\rangle}\right|,
$$
because all $t_i$ are active values of $\Psi^f$ on its respective $({\bf a}_i,{\bf b}_i)$. Taking limits when $i\to \infty$ we obtain, using the continuity of $\Psi^f$, that
$$\left|f(\hat{t})-\frac{\langle {\bf a},{\vg(\hat{t})}\rangle}{\langle {\bf b},{\vh(\hat{t})}\rangle}\right|\leq\left|f(\bar{t})-\frac{\langle {\bf a},{\vg(\bar{t})}\rangle}{\langle {\bf b},{\vh(\bar{t})}\rangle}\right|.$$
 Therefore $\bar{t}$ is an active value. 
\end{proof}
\begin{proposition}\label{optimal}
Given a continuous function $f$, if $0\in \partial \Psi^f(\va,\vb)$ then $(\va,\vb)$ is a global minimiser of $\Psi^f$.
\end{proposition}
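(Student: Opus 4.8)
The plan is to argue by contradiction using the explicit description of the Clarke subdifferential from Theorem~\ref{clarke}. Assume $0\in\partial\Psi^f(\va,\vb)$ but that $(\va,\vb)$ is \emph{not} a global minimiser, so that some feasible pair $(\va',\vb')\in C$ satisfies $d':=\Psi^f(\va',\vb')<\Psi^f(\va,\vb)=:d$. To lighten the notation I write $N(t)=\langle\va,\vg(t)\rangle$, $N'(t)=\langle\va',\vg(t)\rangle$, $D(t)=\langle\vb,\vh(t)\rangle$ and $D'(t)=\langle\vb',\vh(t)\rangle$, noting that feasibility gives $D(t),D'(t)\ge 1$ on $I$.

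By Theorem~\ref{clarke}, the inclusion $0\in\partial\Psi^f(\va,\vb)$ provides, by Carath\'eodory's theorem, finitely many points $t_i\in A^+(\va,\vb)$, $l_j\in A^-(\va,\vb)$ and weights $\lambda_i,\mu_j\ge 0$ with $\sum_i\lambda_i+\sum_j\mu_j=1$ such that $\sum_i\lambda_i\nabla\sigma^f_{t_i}(\va,\vb)=\sum_j\mu_j\nabla\sigma^f_{l_j}(\va,\vb)$. The next step is to compute $\nabla\sigma^f_t(\va,\vb)=\bigl(-\vg(t)/D(t),\,(N(t)/D(t)^2)\vh(t)\bigr)$ and to split the vector identity into its $\va$-block in $\RR^{n+1}$ and its $\vb$-block in $\RR^{m+1}$. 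Pairing the $\va$-block with $\va'$ and the $\vb$-block with $\vb'$ (taking inner products) turns them into two scalar identities involving $N'$ and $D'$ evaluated at the active points.

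The crucial step is to subtract the $\vb$-paired identity from the $\va$-paired identity: at each active point the two contributions merge into the single bilinear quantity $\Phi(t):=N'(t)D(t)-N(t)D'(t)$ over $D(t)^2$, giving the identity $\sum_i\lambda_i\,\Phi(t_i)/D(t_i)^2=\sum_j\mu_j\,\Phi(l_j)/D(l_j)^2$. I expect recognising the correct pairing and this particular combination---the one that assembles $\Phi$ with matching denominators at each point---to be the main obstacle; the rest is bookkeeping.

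It remains to analyse the sign of $\Phi$ at the active points. Writing $N'(t)/D'(t)-N(t)/D(t)=\Phi(t)/\bigl(D(t)D'(t)\bigr)$ and combining the uniform bound $|f(t)-N'(t)/D'(t)|\le d'$ with the active values $N(t_i)/D(t_i)=f(t_i)-d$ for $t_i\in A^+$ and $N(l_j)/D(l_j)=f(l_j)+d$ for $l_j\in A^-$, one obtains $N'(t_i)/D'(t_i)-N(t_i)/D(t_i)\ge d-d'>0$ and $N'(l_j)/D'(l_j)-N(l_j)/D(l_j)\le -(d-d')<0$; since $D,D'>0$ this yields $\Phi(t_i)>0$ and $\Phi(l_j)<0$. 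Hence the left-hand side of the identity above is a sum of nonnegative terms and the right-hand side a sum of nonpositive ones, so both must vanish; the strict signs of the summands then force every $\lambda_i$ and every $\mu_j$ to be zero, contradicting $\sum_i\lambda_i+\sum_j\mu_j=1$. Therefore no improving $(\va',\vb')$ can exist and $(\va,\vb)$ is a global minimiser.
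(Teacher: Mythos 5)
Your proof is correct and is essentially the paper's own argument: the paper likewise pairs the subdifferential generators with an improving feasible point, clears the positive denominators $\langle\vb,\vh(t)\rangle$ and $\langle\vb',\vh(t)\rangle$ to reach exactly your bilinear quantity $N'(t)D(t)-N(t)D'(t)$, and deduces from $|\sigma^f_t(\cdot)|$ strictly decreasing at every active point that this quantity has a uniform strict sign on $A^+$ and the opposite sign on $A^-$. The only difference is packaging: the paper argues contrapositively that the inner product with the improving direction is strictly positive on every generator, hence on all of $\partial\Psi^f(\va,\vb)$, so $0\notin\partial\Psi^f(\va,\vb)$, while you invoke Carath\'eodory and derive a contradiction in the convex weights --- the same separation argument stated dually (and your sign bookkeeping, including $\nabla\sigma^f_t=\bigl(-\vg(t)/D(t),\,(N(t)/D(t)^2)\vh(t)\bigr)$, is the correct one, fixing a sign typo in the paper's compact formula for $\nabla\sigma^f_t$).
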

\begin{proof}
  This proof is largely based on \cite{cheney1964generalized}.

  Define by 
  $$s_t(\va,\vb) = \frac{\sigma^f_t(\va,\vb)}{\Psi^f(\va,\vb)},$$ so that $s_t(\va,\vb) = 1$ (resp. $-1$) when  $t\in A^-(\va,\vb)$ (resp. $t\in A^+(\va,\vb)$).

  Then, $\partial\Psi^f = \co\{s(\va,\vb)\nabla\sigma^f_t(\va,\vb), t\in A^+(\va,\vb)\cup A^-(\va,\vb)\}$, where
%  \begin{align*}
%\nabla\sigma^f_t(\va,\vb) =& \frac{1}{\langle \vb,\vh(t)\rangle^2} \big(-g_0(t)\langle\vb,\vh(t)\rangle,-g_1(t)\langle\vb,\vh(t)\rangle,\ldots,-g_n(t)\langle\vb,\vh(t)\rangle,\\ &h_0(t)\langle\va,\vg(t)\rangle,h_1(t)\langle\va,\vg(t)\rangle,\ldots,h_m(t)\langle\va,\vg(t)\rangle\big)\\
% =& \frac{1}{\langle \vb,\vh(t)\rangle^2}(\langle \vb,\vh(t)\rangle \vg(t),\langle\va,\vg(t)\rangle \vh(t))
% \end{align*}

  \begin{align*}
\nabla\sigma^f_t(\va,\vb) &=\begin{multlined}[t] \frac{1}{\langle \vb,\vh(t)\rangle^2} \big(-g_0(t)\langle\vb,\vh(t)\rangle,-g_1(t)\langle\vb,\vh(t)\rangle,\ldots,-g_n(t)\langle\vb,\vh(t)\rangle,\\
h_0(t)\langle\va,\vg(t)\rangle,h_1(t)\langle\va,\vg(t)\rangle,\ldots,h_m(t)\langle\va,\vg(t)\rangle\big)
\end{multlined}\\
 &= \frac{1}{\langle \vb,\vh(t)\rangle^2}(\langle \vb,\vh(t)\rangle \vg(t),\langle\va,\vg(t)\rangle \vh(t)).
 \end{align*}

  Suppose that the point $(\va,\vb)$ is \emph{not} the global optimiser. In this case, there is a direction $(\va',\vb')\neq 0_{n+m+2}$ such that $\Psi^f(\va-\va',\vb-\vb')<\Psi^f(\va,\vb)$, and $(\va-\va',\vb-\vb')\in C$. In particular for any $t\in A^+(\va,\vb)\cup A^-(\va,\vb)$, $|\sigma^f_t(\va-\va',\vb-\vb')|<|\sigma^f_t(\va,\vb)|$.

  From there, we can find that:
  \begin{align*}
    s_t(\va,\vb) \bigg(f - \frac{\langle \va,\vg(t)\rangle}{\langle \vb,\vh(t)\rangle}\Bigg) - s_t(\va,\vb) \bigg( f - \frac{\langle \va-\va',\vg(t)\rangle}{\langle \vb-\vb',\vh(t)\rangle}\bigg) &> 0 \\
    s_t(\va,\vb) \bigg( \frac{\langle \va-\va',\vg(t)\rangle}{\langle \vb-\vb',\vh(t)\rangle}- \frac{\langle \va,\vg(t)\rangle}{\langle \vb,\vh(t)\rangle}\bigg) &>0\\
    s_t(\va,\vb) \bigg( \langle \va-\va',\vg(t)\rangle- \frac{\langle \va,\vg(t)\rangle}{\langle \vb,\vh(t)\rangle}\langle \vb-\vb',\vh(t)\rangle\bigg) &>0\\
    s_t(\va,\vb) ( -\langle \va',\vg(t)\rangle + \frac{\langle \va,\vg(t)\rangle}{\langle \vb,\vh(t)\rangle}{\langle \vb',\vh(t)\rangle}) &>0\\
    \frac{s_t(\va,\vb)}{\langle \vb,\vh(t)\rangle} \Big( -\langle \vb,\vh(t)\rangle\langle \va',\vg(t)\rangle + \langle \va,\vg(t)\rangle{\langle \vb',\vh(t)\rangle}\Big) &>0\\
    \frac{s_t(\va,\vb)}{\langle \vb,\vh(t)\rangle^2} \Big( \langle \va',-\langle \vb,\vh(t)\rangle\vg(t)\rangle + {\langle \vb',\langle \va,\vg(t)\rangle\vh(t)\rangle}\Big) &>0\\
    \Big\langle (\va',\vb') , s_t(\va,\vb)\nabla \sigma^f_t(\va,\vb) \Big\rangle &>0.
  \end{align*}
  In the above steps, multiplying and dividing the left hand sides by $\langle \vb-\vb',\vh(t)\rangle$ and $\langle \vb,\vh(t)\rangle$ does not change the sign of the inequalities, since both of these quantities are positive.

  Since this product is positive for every $t\in A^+\cup A^-$, it is also positive for every element in $\partial^f\Psi(\va,\vb)$. This implies that $0\notin \partial^f\Psi(\va,\vb)$.
\end{proof}
\begin{theorem}
  The function $\Psi^f$ is a pseudo-convex function in terms of~\cite{aussel,aris,PenotQuang}. Consequently the Clarke Subdifferential $\partial \Psi^f$ is a pseudomonotone operator.
\end{theorem}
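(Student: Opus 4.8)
The plan is to establish pseudo-convexity directly from the definition and then obtain pseudo-monotonicity by invoking the known equivalence. Recall that in the sense of~\cite{aussel,aris,PenotQuang} a locally Lipschitz function $\Psi^f$ is \emph{pseudo-convex} when, for every $x,y\in C$,
\[
\Psi^f(y)<\Psi^f(x)\ \Longrightarrow\ \langle u,\,y-x\rangle<0\quad\text{for all }u\in\partial\Psi^f(x),
\]
equivalently, whenever some $u\in\partial\Psi^f(x)$ satisfies $\langle u,y-x\rangle\ge 0$ then $\Psi^f(y)\ge\Psi^f(x)$. First I would note that $\Psi^f$ is locally Lipschitz on $C$ (a supremum over the compact set $I$ of functions that are smooth wherever the denominator is positive), so that both the Clarke subdifferential and this notion of pseudo-convexity apply.

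To verify the implication, fix $x=(\va,\vb)$ and $y=(\va_0,\vb_0)$ in $C$ with $\Psi^f(y)<\Psi^f(x)$. If $\Psi^f(x)=0$ the hypothesis is vacuous since $\Psi^f\ge 0$, so I assume $\Psi^f(x)>0$ and set $(\va',\vb'):=x-y$, so that $y=(\va-\va',\vb-\vb')$. The decisive observation is that $(\va',\vb')$ plays exactly the role of the descent direction in the proof of Proposition~\ref{optimal}: for every active value $t\in A^+(\va,\vb)\cup A^-(\va,\vb)$ we have $|\sigma^f_t(y)|\le\Psi^f(y)<\Psi^f(x)=|\sigma^f_t(\va,\vb)|$, and the denominators $\langle\vb,\vh(t)\rangle$ and $\langle\vb-\vb',\vh(t)\rangle=\langle\vb_0,\vh(t)\rangle$ are both positive because $x,y\in C$. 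Hence the identical chain of sign-preserving manipulations carried out in Proposition~\ref{optimal} yields
\[
\big\langle (\va',\vb'),\, s_t(\va,\vb)\,\nabla\sigma^f_t(\va,\vb)\big\rangle>0
\qquad\text{for every }t\in A^+(\va,\vb)\cup A^-(\va,\vb).
\]
By Theorem~\ref{clarke} every $u\in\partial\Psi^f(\va,\vb)$ is a convex combination of the vectors $s_t(\va,\vb)\nabla\sigma^f_t(\va,\vb)$, so $\langle(\va',\vb'),u\rangle>0$, that is $\langle u,\,y-x\rangle<0$ for all $u\in\partial\Psi^f(x)$. This is precisely pseudo-convexity.

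For the \emph{consequently} part I would not argue from scratch but appeal to the characterisation of Penot and Quang: for a radially continuous (in particular locally Lipschitz) pseudo-convex function, the Clarke subdifferential is a pseudo-monotone operator in the sense defined in Section~\ref{sec:preliminary}~\cite{aussel,aris,PenotQuang}. Applying this to $\Psi^f$ gives that $\partial\Psi^f$ is pseudo-monotone.

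I anticipate two points needing care. The first is bookkeeping: confirming that $A^+(\va,\vb)\cup A^-(\va,\vb)\ne\varnothing$ (guaranteed by continuity of $t\mapsto\sigma^f_t(\va,\vb)$ and compactness of $I$) and disposing separately of the degenerate case $\Psi^f(x)=0$, where $s_t$ is undefined. The genuine subtlety lies in the passage from pseudo-convexity to pseudo-monotonicity: a naive direct proof stalls on the ``plateau'' case $\Psi^f(x)=\Psi^f(y)$, where $\langle u,y-x\rangle\ge 0$ only forces $\Psi^f(y)\ge\Psi^f(x)$ and the reverse inequality must be extracted by a mean-value or radial-continuity argument. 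This is exactly the content of the cited equivalence theorem, which is why I rely on it rather than reproving the implication by hand.
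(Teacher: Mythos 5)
Your proposal is correct, but it takes a genuinely different route to the first claim than the paper does. The paper's proof is two lines: it combines Lemma~\ref{quasicon} (quasi-convexity of $\Psi^f$) with Proposition~\ref{optimal} (Clarke-critical points are global minimisers) and then invokes Theorem~4.1 of~\cite{aussel}, which characterises pseudo-convexity as exactly this pair of properties; you instead verify the Penot--Quang definition directly, by observing that the chain of sign-preserving manipulations inside the \emph{proof} of Proposition~\ref{optimal} never actually uses that $y$ is a global minimiser --- only that $\Psi^f(y)<\Psi^f(x)$ and that both denominators are positive, which your hypothesis $x,y\in C$ supplies via $\langle \vb,\vh(t)\rangle\geq 1$ and $\langle \vb_0,\vh(t)\rangle\geq 1$. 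In effect you show that the paper's Proposition~\ref{optimal} computation already proves the stronger statement, so Lemma~\ref{quasicon} and Aussel's characterisation become unnecessary; what the paper's route buys in exchange is brevity and a cleaner division of labour (Aussel's theorem absorbs all the limiting and degenerate-case arguments), while your route buys self-containedness and makes the mechanism transparent. Your handling of the degenerate case $\Psi^f(x)=0$ and of nonemptiness of $A^+\cup A^-$ is careful and correct. One step you share with the paper's own Proposition~\ref{optimal} deserves a remark: passing from $\langle(\va',\vb'),s_t(\va,\vb)\nabla\sigma^f_t(\va,\vb)\rangle>0$ for every active $t$ to strict positivity over all of $\partial\Psi^f(\va,\vb)$ requires more than taking convex combinations, since $A^+(\va,\vb)\cup A^-(\va,\vb)$ may be infinite and the subdifferential in Theorem~\ref{clarke} is a convex hull of a possibly infinite family; one needs that this active set is compact and $t\mapsto s_t(\va,\vb)\nabla\sigma^f_t(\va,\vb)$ is continuous on it (using $A^+\cap A^-=\varnothing$ when $\Psi^f(x)>0$), so the inner products are uniformly bounded away from zero. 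The paper elides this point too, so it is not a new gap, but in a self-contained write-up you should supply it. For the second claim your appeal to the pseudo-convexity/pseudo-monotonicity equivalence is exactly the paper's move (it cites Proposition~2.2 of~\cite{aris}), and your remark that the plateau case $\Psi^f(x)=\Psi^f(y)$ is the real content of that equivalence is well taken.
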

\begin{proof}
Following from Lemma \ref{quasicon}, Proposition \ref{optimal} of this paper and Theorem~4.1 in \cite{aussel}, we have that $\Psi^f$ is pseudo-convex. The second part is due to Proposition 2.2 in \cite{aris}.
\end{proof}
This result is especially important, since the basis functions are not restricted to monomials.
\section{Variational inequality and the algorithm}

In this section we develop an algorithm for solving the variation Inequality Problem defined below. With this purpose we present a version of \textbf{Algorithm F} in \cite{be-mi-pa}. 

Consider the operator $T:\RR^n\rightrightarrows \RR^n$ and the set $C\in\RR^n$, the variational inequality problem for $T$ and $C$, denoted by VIP(T,C), is defined as:
\begin{equation}\label{vip}
\text{Find } x^*\in C \text{ : } \exists u^*\in T(x^*) \text{ : } \langle u^*,x-x^*\rangle\geq 0, \forall x\in C.
\end{equation}
When the operator $T$ is a pseudomonotone operator, Problem~\eqref{vip} is equivalent to the dual variational inequality problem (DVIP(T,C)):

\begin{equation}\label{dualvip}
\text{Find } x^*\in C \text{ : } \forall u\in T(x) \text{ : } \langle u,x-x^*\rangle\geq 0, \forall x\in C.
\end{equation}

 We denote the solution of Problem 
 \eqref{vip} by $S^*$ and the solution of Problem \eqref{dualvip} by $S_0$. The equivalence of the Problems \eqref{vip} and \eqref{dualvip} results in $S^*=S_0$.
 
 Now, we present the version of the Linesearch F suitable to our problem: 
 
 \SetKwFor{ForAll}{for all}{}{end}

 \begin{algorithm}[H]
 \DontPrintSemicolon
 \KwIn{$(a,b)\in C$, $\beta>0$ and $\dd\in(0,1)$}
 Set $\alpha\leftarrow 1$ and $\theta \in (0,1)$. \;
 \ForAll{$(u_a,u_b)\in  \left\{ \nabla \sigma^f_t(a,b), -\nabla \sigma^f_l(a,b): t\in A^+(a,b), l\in A^-(a,b)\right\}$}
 {
 define $(z_a,z_b)=P_C((a,b)-\beta (u_a,u_b))$\;
 \uIf{for each $(u_a,u_b)$ we have $$\max_{(u^{\alpha}_{a},u^{\alpha}_b)\in D_{\alpha}}\langle (u^{\alpha}_{a},u^{\alpha}_b),(a,b)-(z_{a},z_{b})\rangle < \delta \langle (u_a,u_b), (a,b)-(z_a,z_b)\rangle,$$
 where $D_{\alpha}:= \left\{ \nabla \sigma^f_t(a_{\alpha},b_{\alpha}), -\nabla \sigma^f_l(a_{\alpha},b_{\alpha}): t\in A^+(a_{\alpha},b_{\alpha}), l\in A^-(a_{\alpha},b_{\alpha})\right\}$ and \\ $(a_{\alpha},b_{\alpha})= \alpha(z_a,z_b)+(1-\alpha)(a,b),$
 }{$\alpha\leftarrow\theta \alpha$}
 \Else{Return $\alpha$ and $(u_a,u_b)$}
 }
 \KwOut{$(\alpha,(u_a,u_b))$}
 \caption{\textbf{LineSearch F:} feasible direction}\label{feasible}
 \end{algorithm}

%\begin{center}\fbox{\begin{minipage}[b]{\textwidth}
%\begin{linesr}{B}
%{\rm(Linesearch on the boundary)}
%\label{boundary}

%\medskip
%\textbf{ Input:} $(x,\sigma,\delta)$. Where $x\in C$, $\sigma>0$, $\dd\in(0,1)$.

%Set $\alpha=\sigma$ and $\theta\in (0,1)$. Denote $z_{\alpha}=P_C(x-\alpha T(x))$.

%\begin{retraitsimple}
%\item[] \textbf{ While} $\alpha\|T(z_{\alpha})-T(x)\| > \delta\|z_{\alpha}-x\|$   \textbf{ do}

%$\alpha\leftarrow\theta \alpha$\quad and\quad.

%\item[] \textbf{ End While}
%\end{retraitsimple}
%\textbf{ Output:} $(\alpha,z_\alpha)$.
%\end{linesr}\end{minipage}}\end{center}

\begin{algorithm}[H]\DontPrintSemicolon
\KwIn{$(\beta_k)_{k\in \NN}\subset[\check{\beta},\hat{\beta}]$ such that $0<\check{\beta}\le \hat{\beta}<+\infty$ and $\dd\in(0,1)$.}
\textbf{Initialization:} Take $(a^0,b^0)\in C$ and set $k\leftarrow 0$.\;

\textbf{Step~1: } Apply Algorithm~\ref{feasible} to compute
\[
\left(\alpha_k,(u_a^k,u_b^k)\right)= \textbf{ Linesearch  F}\big((a^k,b^k),\beta_k,\dd\big),
\]\;
 Set $(z_a^k,z_b^k)=P_C\left((a^k,b^k)-\beta_k (u_a^k,u_b^k)\right)$ then we have
\[
\left\langle (u_a^{\alpha_k},u_b^{\alpha_k}),(a^k,b^k)-(z_a^k,z_b^k)\rangle\geq\delta\langle (u_a^k,u_b^k),(a^k,b^k)-(z_a^k,z_b^k)\right\rangle
\]
with $(\bar{a}^k,\bar{b}^k)=\alpha_k (z_a^k,z_b^k)+(1-\alpha_k)(a^k,b^k)$ and $(u_a^{\alpha_k},u_b^{\alpha_k})\in \partial\Psi^f\left(\bar{a}^k,\bar{b}^k\right)$.\;
\textbf{Step~2 (Stopping Criterion):} 
\lIf{$(z_a^k,z_b^k)=(a^k,b^k)$ or $(a^k,b^k)=P_C\left((a^k,b^k)-(v_a^k,v_b^k)\right)$ with $(v_a^k,v_b^k)\in \partial\Psi^f((a^k,b^k))$,}{stop}
\textbf{Step 3:} Set
\begin{subequations}
\begin{align}
(\bar{a}^k,\bar{b}^k)&:=\alpha_k (z_a^k,z_b^k)+(1-\alpha_k)(a^k,b^k),{\label{xbar2}}\\
\text{and}\quad
(a^{k+1},b^{k+1})&:=\mcF\left((a^k,b^k)\right);\label{P1121}
\end{align}
\end{subequations}
\textbf{Step~4:} If $(a^{k+1},b^{k+1})=(a^k,b^k)$, then stop. Otherwise, set $k\leftarrow k+1$ and go to \textbf{ Step~1}.\;
\caption{\textbf{Algorithm F}}\label{A1}
\end{algorithm}

\noindent We consider three variants of this algorithm. Their main difference lies in way to compute \eqref{P1121}:
\begin{align}
\mcF_{\rm 1}\left((a^k,b^k)\right) =& P_C\left(P_{H\left((\bar{a}^k,\bar{b}^k),(u_a^{\alpha_k},u_b^{\alpha_k})\right)}\left((a^k,b^k)\right)\right);\label{P112}  \quad   &{(\bf Variant\; 1)} \\
\mcF_{\rm 2}\left((a^k,b^k)\right) =& P_{C\cap H\left((\bar{a}^k,\bar{b}^k),(u_a^{\alpha_k},u_b^{\alpha_k})\right)}\left((a^k,b^k)\right);\label{P122}  \quad   &{(\bf Variant\; 2)}
\end{align} where
\begin{equation}\label{H(x,v)}
H(x,u):=\left\{ y\in \RR^{n+m+2} : \langle u,y-x\rangle\le 0\right\}. 
\end{equation}

\subsection{Convergence}

This section is dedicated to prove the convergence of \textbf{ Algorithm F}. We first show that \textbf{Linesearch F} terminates. 
\begin{proposition}\label{linewell}
 If $(a,b)\in C$ is not a solution of Problem \eqref{vip}, \textbf{Linesearch F} terminates after finitely many iterations. 
\end{proposition}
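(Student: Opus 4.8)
```latex
\textbf{Proof proposal (plan).}
The plan is to argue by contradiction: suppose \textbf{Linesearch F} never terminates. Since the step parameter is updated by $\alpha \leftarrow \theta\alpha$ with $\theta\in(0,1)$ each time the inner test fails, non-termination for a fixed direction $(u_a,u_b)$ forces $\alpha_j = \theta^j \to 0$ as $j\to\infty$. For each such $\alpha_j$ the test that kept the loop running reads
\begin{equation}\label{plan:fail}
\max_{(u^{\alpha_j}_a,u^{\alpha_j}_b)\in D_{\alpha_j}} \big\langle (u^{\alpha_j}_a,u^{\alpha_j}_b),(a,b)-(z_a,z_b)\big\rangle \;\geq\; \delta\,\big\langle (u_a,u_b),(a,b)-(z_a,z_b)\big\rangle,
\end{equation}
where $(a_{\alpha_j},b_{\alpha_j}) = \alpha_j(z_a,z_b)+(1-\alpha_j)(a,b)$. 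The first step is therefore to record that $(a_{\alpha_j},b_{\alpha_j}) \to (a,b)$ as $\alpha_j\to 0$, and to choose, for each $j$, an active value $t_j\in A^+(a_{\alpha_j},b_{\alpha_j})\cup A^-(a_{\alpha_j},b_{\alpha_j})$ that attains the maximum on the left of \eqref{plan:fail}.

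Next I would extract a convergent subsequence of the $t_j$, which exists because $I$ is compact, with limit $\bar t\in I$. Applying Lemma~\ref{continu} to the sequence $(a_{\alpha_j},b_{\alpha_j})\to(a,b)$ and the active values $t_j$, the limit $\bar t$ is an active value of $\Psi^f$ at $(a,b)$, i.e.\ $\bar t\in A^+(a,b)\cup A^-(a,b)$. By continuity of the gradients $\nabla\sigma^f_t$ in both $t$ and $(a,b)$ (the basis functions $\vg,\vh$ are continuous and $\langle \vb,\vh(t)\rangle\ge 1$ keeps the denominators bounded away from zero on $C$), the maximising elements of $D_{\alpha_j}$ converge to an element of the generating set of $\partial\Psi^f(a,b)$ associated with $\bar t$. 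Passing to the limit in \eqref{plan:fail} then yields
\begin{equation}\label{plan:limit}
\big\langle (\bar u_a,\bar u_b),(a,b)-(z_a,z_b)\big\rangle \;\geq\; \delta\,\big\langle (u_a,u_b),(a,b)-(z_a,z_b)\big\rangle,
\end{equation}
for some $(\bar u_a,\bar u_b)$ in the generating set at $(a,b)$.

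The heart of the argument is then to show that \eqref{plan:limit} is incompatible with $(a,b)$ being a \emph{non}-solution of Problem~\eqref{vip}. Here I would use the projection characterisation from Fact~\ref{proj}\eqref{proj-ii}: writing $(z_a,z_b)=P_C((a,b)-\beta(u_a,u_b))$, the obtuse-angle property gives a lower bound of the form $\langle (u_a,u_b),(a,b)-(z_a,z_b)\rangle \ge \tfrac{1}{\beta}\|(a,b)-(z_a,z_b)\|^2$, which is strictly positive precisely when $(z_a,z_b)\neq(a,b)$, and the latter is guaranteed by the standing hypothesis that $(a,b)$ is not a solution of \eqref{vip}. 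Since $\delta\in(0,1)$, \eqref{plan:limit} would require the limiting direction $(\bar u_a,\bar u_b)\in\partial\Psi^f(a,b)$ to make an inner product with $(a,b)-(z_a,z_b)$ that is at least $\delta$ times this strictly positive quantity; but at $\alpha=0$ the set $D_0$ coincides with the generating set of $\partial\Psi^f(a,b)$, so the maximum over $D_0$ equals the full directional quantity (corresponding to $\alpha=1$ in the test, up to the factor), and one checks the test is \emph{satisfied} in the limit rather than failing. This contradicts \eqref{plan:limit} holding for every $j$, so the loop must terminate after finitely many halvings.

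The step I expect to be the main obstacle is the limit transition for the maximum in \eqref{plan:fail}: the set-valued map $(a,b)\mapsto D_{(a,b)}$ is \emph{not} inner semicontinuous, as the Example preceding Lemma~\ref{continu} shows explicitly, so I cannot simply pass the limit through $\max_{(\cdot)\in D_{\alpha_j}}$ by continuity of the argmax. The correct device is Lemma~\ref{continu}, which provides exactly the needed \emph{outer}-type semicontinuity of the active-value set and thereby controls the limit of the maximisers; making the interchange of limit and maximum rigorous via this lemma, rather than through any (false) inner semicontinuity, is the delicate point of the proof.
```
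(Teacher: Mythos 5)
Your skeleton --- contradiction, $\alpha=\theta^j\to 0$, selection of active values, compactness of $I$, Lemma~\ref{continu} as the substitute for the (false) inner semicontinuity of $\partial\Psi^f$, and the projection inequality from Fact~\ref{proj} --- is exactly the paper's. But you have reversed the direction of the linesearch test, and this breaks the proof precisely at what you call its heart. In \textbf{Linesearch F} the step is shrunk ($\alpha\leftarrow\theta\alpha$) \emph{while} the strict inequality $\max_{(u^{\alpha}_a,u^{\alpha}_b)\in D_{\alpha}}\langle (u^{\alpha}_a,u^{\alpha}_b),(a,b)-(z_a,z_b)\rangle < \delta\langle (u_a,u_b),(a,b)-(z_a,z_b)\rangle$ holds for each $(u_a,u_b)$; the exit branch corresponds to ``$\geq$'', as confirmed by Step~1 of \textbf{Algorithm F}, where the returned pair is asserted to satisfy $\langle (u_a^{\alpha_k},u_b^{\alpha_k}),(a^k,b^k)-(z_a^k,z_b^k)\rangle\geq\delta\langle (u_a^k,u_b^k),(a^k,b^k)-(z_a^k,z_b^k)\rangle$. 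Non-termination therefore yields the strict ``$<$'' for every $\alpha=\theta^j$, not the ``$\geq$'' in your display, and your limit relation $\langle(\bar u_a,\bar u_b),(a,b)-(z_a,z_b)\rangle\geq\delta\langle(u_a,u_b),(a,b)-(z_a,z_b)\rangle$ carries no contradiction whatsoever: since, as you correctly derive from Fact~\ref{proj}(\ref{proj-ii}), $\langle(u_a,u_b),(a,b)-(z_a,z_b)\rangle\geq\frac{1}{\beta}\|(a,b)-(z_a,z_b)\|^{2}>0$ when $(a,b)$ is not a solution, both sides of your relation are positive and perfectly compatible (take $(\bar u_a,\bar u_b)$ close to $(u_a,u_b)$ and use $\delta<1$). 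Your closing step --- ``the test is satisfied in the limit rather than failing, which contradicts the relation holding for every $j$'' --- is circular: under your reading, the relation holding for every $j$ \emph{is} the test being satisfied, so the loop could run forever and the proposition would simply be false; no contradiction is produced.

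With the inequality the right way around the argument closes, and it is the paper's proof. Passing to the limit in the ``$<$'' inequalities via Lemma~\ref{continu} and continuity of $\nabla\sigma^f_t$ (your handling of this step, including the observation that only outer-type semicontinuity of the active set is available, is correct and matches the paper) produces $(\bar u_a,\bar u_b)$ in the generating set of $\partial\Psi^f(a,b)$. One further point your plan omits: because the failed test is quantified over \emph{all} $(u_a,u_b)$ in the generating set at $(a,b)$, you must instantiate the right-hand side at $(u_a,u_b)=(\bar u_a,\bar u_b)$ with $(z_a,z_b)=P_C\bigl((a,b)-\beta(\bar u_a,\bar u_b)\bigr)$, obtaining the \emph{same} element on both sides as in \eqref{imply}: $\langle(\bar u_a,\bar u_b),(a,b)-(z_a,z_b)\rangle\leq\delta\langle(\bar u_a,\bar u_b),(a,b)-(z_a,z_b)\rangle$, hence $(1-\delta)\langle(\bar u_a,\bar u_b),(a,b)-(z_a,z_b)\rangle\leq 0$. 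Combined with the projection bound this forces $\|(a,b)-(z_a,z_b)\|^{2}=0$, i.e.\ $(a,b)=P_C\bigl((a,b)-\beta(\bar u_a,\bar u_b)\bigr)$, so $(a,b)$ solves \eqref{vip}, contradicting the hypothesis. Keeping the right-hand direction as an unrelated fixed $(u_a,u_b)$, as your plan does, would not suffice even after correcting the inequality, since $\langle(\bar u_a,\bar u_b),(a,b)-(z_a,z_b)\rangle\leq\delta\langle(u_a,u_b),(a,b)-(z_a,z_b)\rangle$ with two different directions forces nothing.
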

\begin{proof}
Suppose that \textbf{Linesearch F} never stops. Then, for all $\alpha \in\{1,\theta,\theta^2,\cdots\}$ and $(u_a,u_b)\in \partial \Psi^f(a,b)$ we have 
$$\left\langle (u_a^{\alpha},u_b^{\alpha}),(a,b)-(z_a,z_b)\rangle<\delta\langle (u_a,u_b),(a,b)-(z_a,z_b)\right\rangle $$
for all $(u_a^{\alpha},u_b^{\alpha})\in \partial \Psi^f\left(a_\alpha,b_\alpha\right)$, with $(a_\alpha,b_\alpha)=\alpha  (z_a ,z_b )+(1-\alpha )(a,b)$.  Note that by the structure of the set $\partial \Psi^f(a,b)$ it is enough to verify the inequality over the points in $D_\alpha$. Taking limits when $\alpha\to 0$, we have that $$(a_\alpha,b_\alpha)=\alpha(z_a,z_b)+(1-\alpha)(a,b)\to (a,b).$$ 
 By Lemma \ref{continu}, there exists $(u_a,u_b)\in \partial \Psi^f(a,b)$ such that
 
\begin{equation}\label{imply}
\left\langle (u_a,u_b),(a,b)-(z_a,z_b)\rangle\leq\delta\langle (u_a,u_b),(a,b)-(z_a,z_b)\right\rangle. 
\end{equation} 
  Which implies that $(1-\delta)\langle (u_a,u_b),(a,b)-(z_a,z_b)\rangle\leq 0$ since $\delta\in(0,1)$ we have
  \begin{align*}
  0&\geq \langle (u_a,u_b),(a,b)-(z_a,z_b)\rangle\\
  &\geq\|(a,b)-(z_a,z_b)\|^2+\langle (z_a,z_b)-\left((a,b)-(u_a,u_b)\right),(a,b)-(u_a,u_b)\rangle.
  \end{align*}
   Using now Fact~\ref{proj} we have that $$\langle (z_a,z_b)-\left((a,b)-(u_a,u_b)\right),(a,b)-(u_a,u_b)\rangle\geq 0,$$ then 
   $$ 0\geq \langle (u_a,u_b),(a,b)-(z_a,z_b)\rangle\geq\|(a,b)-(z_a,z_b)\|^2\geq 0,$$
which implies that $\|(a,b)-(z_a,z_b)\|^2=0$, then $(a,b)=(z_a,z_b)$ implying that $(a,b)$ is a solution of Problem \ref{vip}, which is a contradiction.
\end{proof}
\begin{proposition}\label{xkinhk}
$(a^k,b^k)\in S^*$ if and only if $(a^k,b^k)\in H\left((\bar{a}^k,\bar{b}^k),(u_a^{\alpha_k},u_b^{\alpha_k})\right)$.
\end{proposition}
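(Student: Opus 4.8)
The plan is to prove the biconditional in Proposition~\ref{xkinhk} by examining what it means for $(a^k,b^k)$ to lie in the halfspace $H\left((\bar{a}^k,\bar{b}^k),(u_a^{\alpha_k},u_b^{\alpha_k})\right)$ and connecting this directly to the variational inequality characterisation of $S^*$. Unwinding the definition~\eqref{H(x,v)} with $x=(\bar{a}^k,\bar{b}^k)$ and $u=(u_a^{\alpha_k},u_b^{\alpha_k})$, the membership $(a^k,b^k)\in H$ is equivalent to the scalar inequality $\langle (u_a^{\alpha_k},u_b^{\alpha_k}),(a^k,b^k)-(\bar{a}^k,\bar{b}^k)\rangle\le 0$. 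The first step is therefore to rewrite this inner product using the fact that $(\bar{a}^k,\bar{b}^k)=\alpha_k(z_a^k,z_b^k)+(1-\alpha_k)(a^k,b^k)$, so that $(a^k,b^k)-(\bar{a}^k,\bar{b}^k)=\alpha_k\big((a^k,b^k)-(z_a^k,z_b^k)\big)$. Since $\alpha_k>0$, the halfspace condition reduces to $\langle(u_a^{\alpha_k},u_b^{\alpha_k}),(a^k,b^k)-(z_a^k,z_b^k)\rangle\le 0$.

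Next I would invoke the Linesearch F output inequality recorded in Step~1 of Algorithm~\ref{A1}, namely
\[
\langle(u_a^{\alpha_k},u_b^{\alpha_k}),(a^k,b^k)-(z_a^k,z_b^k)\rangle\ge\delta\langle(u_a^k,u_b^k),(a^k,b^k)-(z_a^k,z_b^k)\rangle,
\]
together with the projection estimate. Following the same computation as in Proposition~\ref{linewell}, one shows $\langle(u_a^k,u_b^k),(a^k,b^k)-(z_a^k,z_b^k)\rangle\ge\|(a^k,b^k)-(z_a^k,z_b^k)\|^2\ge 0$, using Fact~\ref{proj}\eqref{proj-ii} applied to the projection $(z_a^k,z_b^k)=P_C\big((a^k,b^k)-\beta_k(u_a^k,u_b^k)\big)$. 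Combining these, the halfspace inequality $\langle(u_a^{\alpha_k},u_b^{\alpha_k}),(a^k,b^k)-(z_a^k,z_b^k)\rangle\le 0$ forces the whole chain to collapse to zero, whence $\|(a^k,b^k)-(z_a^k,z_b^k)\|^2=0$, i.e. $(z_a^k,z_b^k)=(a^k,b^k)$. This is the key reduction: \emph{membership in $H$ is equivalent to the fixed-point condition $(z_a^k,z_b^k)=(a^k,b^k)$.}

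It then remains to show that $(z_a^k,z_b^k)=(a^k,b^k)$ is in turn equivalent to $(a^k,b^k)\in S^*$. For the forward direction, if $(a^k,b^k)=P_C\big((a^k,b^k)-\beta_k(u_a^k,u_b^k)\big)$, then Fact~\ref{proj}\eqref{proj-ii} gives $\langle(a^k,b^k)-\beta_k(u_a^k,u_b^k)-(a^k,b^k),\,z-(a^k,b^k)\rangle\le 0$ for all $z\in C$, which simplifies to $\beta_k\langle(u_a^k,u_b^k),z-(a^k,b^k)\rangle\ge 0$; dividing by $\beta_k>0$ and recognising $(u_a^k,u_b^k)\in\partial\Psi^f(a^k,b^k)=T(a^k,b^k)$ yields exactly the VIP condition~\eqref{vip}, so $(a^k,b^k)\in S^*$. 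For the converse, if $(a^k,b^k)\in S^*$, then by pseudomonotonicity and the equivalence $S^*=S_0$ the dual condition~\eqref{dualvip} holds for \emph{every} element of $T$, in particular for $(u_a^k,u_b^k)$, giving $\langle(u_a^k,u_b^k),z-(a^k,b^k)\rangle\ge 0$ for all $z\in C$; reversing the projection characterisation then recovers $(a^k,b^k)=P_C\big((a^k,b^k)-\beta_k(u_a^k,u_b^k)\big)=(z_a^k,z_b^k)$.

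The main obstacle I anticipate is the converse direction: passing from $(a^k,b^k)\in S^*$ back to the fixed-point equality requires that the \emph{particular} subgradient $(u_a^k,u_b^k)$ selected by Linesearch F satisfies the variational inequality, not merely \emph{some} subgradient. This is precisely where the pseudomonotonicity of $\partial\Psi^f$ and the resulting equivalence $S^*=S_0$ from the dual problem~\eqref{dualvip} become essential, since the dual formulation quantifies over all $u\in T(x)$. I would therefore make sure to flag the use of $S^*=S_0$ explicitly at that step, as the forward direction needs only the weaker primal characterisation whereas the converse genuinely relies on the pseudomonotone structure established in the preceding section.
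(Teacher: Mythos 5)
Your reduction of membership in $H\left((\bar{a}^k,\bar{b}^k),(u_a^{\alpha_k},u_b^{\alpha_k})\right)$ to the scalar inequality $\langle (u_a^{\alpha_k},u_b^{\alpha_k}),(a^k,b^k)-(z_a^k,z_b^k)\rangle\le 0$ via $(a^k,b^k)-(\bar{a}^k,\bar{b}^k)=\alpha_k\big((a^k,b^k)-(z_a^k,z_b^k)\big)$, and the forward implication built on it, are correct: combined with the linesearch inequality and the projection estimate (which, as a minor point, should carry the stepsize, i.e.\ $\beta_k\langle (u_a^k,u_b^k),(a^k,b^k)-(z_a^k,z_b^k)\rangle\ge\|(a^k,b^k)-(z_a^k,z_b^k)\|^2$ from Fact~\ref{proj}(\ref{proj-ii}) applied to $(z_a^k,z_b^k)=P_C\big((a^k,b^k)-\beta_k(u_a^k,u_b^k)\big)$ --- harmless for the sign argument), the chain collapses and yields $(z_a^k,z_b^k)=(a^k,b^k)$, whence Fact~\ref{proj}(\ref{proj-ii}) produces the VIP condition \eqref{vip} with witness $(u_a^k,u_b^k)\in\partial\Psi^f(a^k,b^k)$.

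The converse is where your proof has a genuine gap, and it is exactly the obstacle you flagged but did not actually resolve. The dual problem \eqref{dualvip} quantifies over pairs $(x,u)$ with $u\in T(x)$ and asserts $\langle u,x-x^*\rangle\ge 0$, pairing each $u$ with \emph{its own} base point $x$; it does \emph{not} assert that a fixed $(u_a^k,u_b^k)\in T(a^k,b^k)$ satisfies $\langle (u_a^k,u_b^k),z-(a^k,b^k)\rangle\ge 0$ for all $z\in C$ --- instantiating \eqref{dualvip} at $x=(a^k,b^k)=x^*$ gives only the vacuous $0\ge 0$. Moreover the property you invoke is false for an arbitrary subgradient at a solution: take $\Psi(x)=|x|$ on $C=[0,1]$, $x^*=0$ and $u=-1/2\in\partial\Psi(0)$; then $\langle u,z\rangle<0$ for $z>0$ and $P_C(x^*-\beta u)=\beta/2\ne x^*$. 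So $S^*=S_0$ applied at $(a^k,b^k)$ cannot deliver $(z_a^k,z_b^k)=(a^k,b^k)$. The correct converse --- the argument of Proposition~4.3 in \cite{be-mi-pa}, which is all the paper cites for this statement --- is shorter and bypasses the fixed point entirely: $(\bar{a}^k,\bar{b}^k)$ lies in $C$ as a convex combination of the feasible points $(a^k,b^k)$ and $(z_a^k,z_b^k)$, and $(u_a^{\alpha_k},u_b^{\alpha_k})\in\partial\Psi^f(\bar{a}^k,\bar{b}^k)$, so the dual inequality \eqref{dualvip} applied at $x=(\bar{a}^k,\bar{b}^k)$ gives $\langle (u_a^{\alpha_k},u_b^{\alpha_k}),(\bar{a}^k,\bar{b}^k)-(a^k,b^k)\rangle\ge 0$, which is precisely $(a^k,b^k)\in H\left((\bar{a}^k,\bar{b}^k),(u_a^{\alpha_k},u_b^{\alpha_k})\right)$. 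Your intermediate fixed-point claim is in fact true along the iterates, but only as a \emph{consequence} of halfspace membership obtained this way together with your own collapsing chain; routing the converse through the fixed point therefore cannot be repaired without first proving the halfspace membership, i.e.\ it is circular as a path to the converse.
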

\begin{proof}
See Proposition 4.3 in \cite{be-mi-pa}.
\end{proof}
\begin{proposition}
If \textbf{ Algorithm F} stops at steps 2 or 4, then it stops at the solution. 
\end{proposition}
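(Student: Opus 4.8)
The plan is to show that termination at Step~2 or Step~4 can only occur when the current iterate satisfies the variational inequality \eqref{vip}, i.e. lies in $S^*$. There are three distinct stopping conditions to address, and I would treat each separately. The stopping condition in Step~4 is $(a^{k+1},b^{k+1})=(a^k,b^k)$, and in Step~2 there are two alternatives: either $(z_a^k,z_b^k)=(a^k,b^k)$, or $(a^k,b^k)=P_C\!\left((a^k,b^k)-(v_a^k,v_b^k)\right)$ for some $(v_a^k,v_b^k)\in\partial\Psi^f((a^k,b^k))$.

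First I would dispose of the easiest case, the second alternative in Step~2. If $(a^k,b^k)=P_C\!\left((a^k,b^k)-(v_a^k,v_b^k)\right)$, then applying the projection characterisation in Fact~\ref{proj}\eqref{proj-ii} with $x=(a^k,b^k)-(v_a^k,v_b^k)$ and $P_C(x)=(a^k,b^k)$ gives, for every $z=(a,b)\in C$,
\[
\langle (a^k,b^k)-(v_a^k,v_b^k)-(a^k,b^k),\,(a,b)-(a^k,b^k)\rangle\le 0,
\]
which simplifies to $\langle (v_a^k,v_b^k),(a,b)-(a^k,b^k)\rangle\ge 0$ for all $(a,b)\in C$. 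Since $(v_a^k,v_b^k)\in\partial\Psi^f((a^k,b^k))=T((a^k,b^k))$, this is exactly the statement that $(a^k,b^k)\in S^*$. For the first alternative in Step~2, namely $(z_a^k,z_b^k)=(a^k,b^k)$, I would run the same projection argument but with $x=(a^k,b^k)-\beta_k(u_a^k,u_b^k)$ and $P_C(x)=(z_a^k,z_b^k)=(a^k,b^k)$; dividing by the positive scalar $\beta_k$ yields $\langle (u_a^k,u_b^k),(a,b)-(a^k,b^k)\rangle\ge 0$ for all $(a,b)\in C$, and again $(u_a^k,u_b^k)\in\partial\Psi^f((a^k,b^k))$ delivers $(a^k,b^k)\in S^*$.

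For Step~4, the condition $(a^{k+1},b^{k+1})=(a^k,b^k)$ must be translated through the update map $\mcF$ defined in \eqref{P112}/\eqref{P122}. The natural route is to invoke Proposition~\ref{xkinhk}: I would argue that a fixed point of the update forces $(a^k,b^k)$ into the halfspace $H\!\left((\bar a^k,\bar b^k),(u_a^{\alpha_k},u_b^{\alpha_k})\right)$. For Variant~2 this is transparent, since $(a^{k+1},b^{k+1})=P_{C\cap H}((a^k,b^k))$ equals $(a^k,b^k)$ exactly when $(a^k,b^k)\in C\cap H$, and $(a^k,b^k)\in C$ already holds; for Variant~1 one must verify that the composition of projections fixes a point only if it already lies in $H$. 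Once membership in $H$ is established, Proposition~\ref{xkinhk} gives the equivalence $(a^k,b^k)\in S^*$ directly.

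The main obstacle I anticipate is the Variant~1 case of Step~4: unwinding the nested projection $P_C\!\left(P_{H}((a^k,b^k))\right)$ and showing that equality with $(a^k,b^k)$ genuinely forces $(a^k,b^k)\in H$, rather than merely being a fixed point of the composite map for some other geometric reason. I would handle this by noting that if $(a^k,b^k)\notin H$, then $P_H((a^k,b^k))\ne(a^k,b^k)$ strictly decreases distance to any point of $S^*$ (using that $S^*\subseteq H$, a fact underlying Proposition~\ref{xkinhk}), and the subsequent projection onto $C$ cannot restore the original point; this contradicts the fixed-point assumption. Thus $(a^k,b^k)\in H$, and Proposition~\ref{xkinhk} closes the argument.
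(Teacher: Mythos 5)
Your proof is correct, but it does not follow the paper's proof, because the paper contains no argument here at all: its entire proof is the citation ``See Proposition 4.4 in \cite{be-mi-pa}''. You have therefore reconstructed, self-contained, what the paper outsources. Your handling of both Step~2 alternatives via Fact~\ref{proj}(\ref{proj-ii}) is exactly right: in each case $x-P_C(x)$ is a negative multiple of a subgradient at $(a^k,b^k)$ (dividing by $\beta_k\geq\check{\beta}>0$ in the first case), and the resulting inequality $\langle (v_a^k,v_b^k),(a,b)-(a^k,b^k)\rangle\geq 0$ for all $(a,b)\in C$ is literally \eqref{vip}; likewise Variant~2 of Step~4 is immediate, since $P_{C\cap H}$ fixes exactly the points of $C\cap H$ and the iterates remain in $C$. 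The one place you lean on something unstated is the Variant~1 case: your strict-Fej\'er argument needs a solution $s\in S^*$ to exist, with $S^*\subseteq H\left((\bar{a}^k,\bar{b}^k),(u_a^{\alpha_k},u_b^{\alpha_k})\right)$ --- the inclusion does follow from the dual problem \eqref{dualvip} together with $S^*=S_0$, but nonemptiness of $S^*$ is a standing assumption inherited silently from \cite{be-mi-pa} (the paper's own Fej\'er convergence analysis also presupposes it), so you should flag it explicitly. If you prefer to avoid it, there is a more elementary route: when $(a^k,b^k)\notin H$ and $(u_a^{\alpha_k},u_b^{\alpha_k})\neq 0$, the halfspace projection satisfies $P_H((a^k,b^k))-(a^k,b^k)=-\lambda(u_a^{\alpha_k},u_b^{\alpha_k})$ with $\lambda>0$, so applying Fact~\ref{proj}(\ref{proj-ii}) to the assumed fixed point $(a^k,b^k)=P_C\left(P_H((a^k,b^k))\right)$ with $z=(\bar{a}^k,\bar{b}^k)\in C$ gives $\langle(u_a^{\alpha_k},u_b^{\alpha_k}),(a^k,b^k)-(z_a^k,z_b^k)\rangle\leq 0$, while the linesearch guarantee of Step~1 together with $\beta_k\langle(u_a^k,u_b^k),(a^k,b^k)-(z_a^k,z_b^k)\rangle\geq\|(a^k,b^k)-(z_a^k,z_b^k)\|^2$ (Fact~\ref{proj}(\ref{proj-ii}) again) bounds that same quantity below by $\frac{\delta}{\beta_k}\|(a^k,b^k)-(z_a^k,z_b^k)\|^2$; hence $(z_a^k,z_b^k)=(a^k,b^k)$, and Step~4 reduces to the Step~2 case you already settled, with no appeal to $S^*\neq\emptyset$.
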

\begin{proof}
See Proposition 4.4 in \cite{be-mi-pa}.
\end{proof}
\subsubsection{Convergence Analysis of $\mcF_{\rm 1}$}

Now we provide the convergence analysis of the \textbf{ Variant $\mcF_{\rm 1}$}. From now on, we assume that \textbf{Algorithm F} produces an infinite sequence $(a^k,b^k)_{k\in\NN}\nsubseteq S_*$.
Due to Proposition 4.5(i) in \cite{be-mi-pa} we have the Fej\'er convergence and consequently, the boundedness, by Proposition \ref{fejer}, of the sequence $(a^k,b^k)_{k\in\NN}$. Also 

\begin{equation}
\lim_{k\to\infty}\langle (u_a^{\alpha_k},u_b^{\alpha_k}),(a^k,b^k)-(\bar{a}^k,\bar{b}^k)\rangle=0.\label{limite}
\end{equation} 

The next theorem will be proved following the ideas of the Theorem 4.6 in \cite{be-mi-pa}, adapted to our point-to-set case. 

\begin{theorem}\label{xkconverf1}
The sequence $(a^k,b^k)_{k\in\NN}$ converges to a point in $S_*$.
\end{theorem}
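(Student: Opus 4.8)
To lighten the notation write $x^k=(a^k,b^k)$, $z^k=(z_a^k,z_b^k)$, $\bar x^k=(\bar a^k,\bar b^k)$, $u^k=(u_a^k,u_b^k)$ and $u^{\alpha_k}=(u_a^{\alpha_k},u_b^{\alpha_k})$. The plan is to first produce a cluster point of $(x^k)_{k\in\NN}$ lying in $S_*$, and then upgrade this to convergence of the whole sequence through Fej\'er convergence. Since $(x^k)_{k\in\NN}$ is Fej\'er convergent to $S_*$, it is bounded by Proposition~\ref{fejer}, so it has cluster points. The generators of $\partial\Psi^f$ are the vectors $\pm\nabla\sigma^f_t$, which by the explicit formula recorded in the proof of Proposition~\ref{optimal} depend continuously on $(\va,\vb,t)$ and are therefore uniformly bounded as $(\va,\vb)$ ranges over a bounded set with $\langle\vb,\vh(t)\rangle\ge1$ and $t\in I$; hence $(u^k)$ and $(u^{\alpha_k})$ are bounded, and so is $(z^k)$ since $P_C$ is nonexpansive. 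Fix a subsequence (not relabelled) along which $x^k\to\hat x=(\hat\va,\hat\vb)\in C$, $z^k\to\hat z$, $u^k\to\hat u$ and $\beta_k\to\bar\beta\in[\check\beta,\hat\beta]$; by Lemma~\ref{continu} together with the continuity of $\nabla\sigma^f$ we have $\hat u\in\partial\Psi^f(\hat x)$.

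The key reduction is to show that $\|x^k-z^k\|\to0$ along this subsequence. From $\bar x^k=\alpha_k z^k+(1-\alpha_k)x^k$ we get $x^k-\bar x^k=\alpha_k(x^k-z^k)$, so \eqref{limite} becomes $\alpha_k\langle u^{\alpha_k},x^k-z^k\rangle\to0$. The acceptance inequality returned by \textbf{Linesearch~F} gives $\langle u^{\alpha_k},x^k-z^k\rangle\ge\delta\langle u^k,x^k-z^k\rangle$, while Fact~\ref{proj}(\ref{proj-ii}) applied to $z^k=P_C(x^k-\beta_k u^k)$ with the feasible point $x^k$ yields $\langle u^k,x^k-z^k\rangle\ge\|x^k-z^k\|^2/\beta_k\ge\|x^k-z^k\|^2/\hat\beta\ge0$. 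Combining these three facts gives $\alpha_k\|x^k-z^k\|^2\to0$. If $\liminf_k\alpha_k>0$ along the subsequence, then $\|x^k-z^k\|\to0$ at once, i.e.\ $\hat z=\hat x$. Once $\|x^k-z^k\|\to0$ is known, passing to the limit in the inequality $\langle x^k-\beta_ku^k-z^k,x-z^k\rangle\le0$ (Fact~\ref{proj}(\ref{proj-ii}), valid for every $x\in C$) and using $\hat z=\hat x$ yields $\bar\beta\langle\hat u,x-\hat x\rangle\ge0$ for all $x\in C$; since $\bar\beta>0$ and $\hat u\in\partial\Psi^f(\hat x)$, the point $\hat x$ satisfies \eqref{vip}, that is $\hat x\in S_*$.

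The remaining case $\alpha_k\to0$ is the genuine obstacle. Here one uses that the step $\alpha_k/\theta$ was \emph{rejected} by the linesearch, so at the intermediate point $y^k:=x^k+(\alpha_k/\theta)(z^k-x^k)$ every generator $w\in D_{\alpha_k/\theta}$ obeys $\langle w,x^k-z^k\rangle<\delta\langle u^k,x^k-z^k\rangle$, and $y^k\to\hat x$ because $\alpha_k/\theta\to0$. The difficulty is precisely the failure of inner semicontinuity of $\partial\Psi^f$ exhibited in the Example: one cannot in general choose generators $w^k\in D_{\alpha_k/\theta}$ with $w^k\to\hat u$, so the rejection inequality cannot be converted directly into $\langle\hat u,\hat x-\hat z\rangle\le\delta\langle\hat u,\hat x-\hat z\rangle$. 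The plan is instead to use only the \emph{outer} semicontinuity supplied by Lemma~\ref{continu}: take the maximising generators $w^k\in D_{\alpha_k/\theta}$, extract a convergent subsequence $w^k\to\hat w\in\partial\Psi^f(\hat x)$ (Lemma~\ref{continu} and continuity of $\nabla\sigma^f$), and pass to the limit to obtain $\langle\hat w,\hat x-\hat z\rangle\le\delta\langle\hat u,\hat x-\hat z\rangle$; combined with the projection lower bound $\langle\hat u,\hat x-\hat z\rangle\ge\|\hat x-\hat z\|^2/\hat\beta\ge0$ and an upper semicontinuity estimate for the support function $\max_{w\in D_{\alpha_k/\theta}}\langle w,\cdot\rangle$, this is designed to force $\hat z=\hat x$ again, reducing the case to the previous one. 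Carrying out this limit carefully is the main technical point and is exactly where the adaptation of Theorem~4.6 of \cite{be-mi-pa} to the present point-to-set setting is required.

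Finally, once a cluster point $\hat x\in S_*$ has been found, convergence of the whole sequence follows from Fej\'er convergence. Since $\hat x\in S_*$, the scalar sequence $\|x^k-\hat x\|$ is eventually nonincreasing, hence convergent; as one of its subsequences tends to $0$, the limit is $0$, and therefore $(a^k,b^k)\to\hat x\in S_*$.
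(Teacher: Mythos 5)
Your overall skeleton is sound and matches the paper's: Fej\'er monotonicity plus Proposition~\ref{fejer} for boundedness, the estimate $\alpha_k\|x^k-z^k\|^2\to 0$ obtained from \eqref{limite}, the linesearch acceptance inequality and Fact~\ref{proj}\eqref{proj-ii}, the case $\liminf_k\alpha_k>0$ handled by passing to the limit in the projection characterisation (here you are actually more careful than the paper, which silently drops $\beta_k$), and the final upgrade from one cluster point in $S_*$ to convergence of the whole sequence. The genuine gap is the case $\alpha_k\to 0$, which you explicitly leave open. The limit you propose there, $\langle\hat w,\hat x-\hat z\rangle\le\delta\langle\hat u,\hat x-\hat z\rangle$ with two \emph{possibly different} subgradients $\hat w,\hat u\in\partial\Psi^f(\hat x)$, is too weak to force $\hat z=\hat x$: since $\langle\hat u,\hat x-\hat z\rangle\ge 0$, this inequality only bounds $\langle\hat w,\hat x-\hat z\rangle$ above by a nonnegative quantity and is perfectly consistent with $\hat z\ne\hat x$ (for instance $\langle\hat w,\hat x-\hat z\rangle<0<\langle\hat u,\hat x-\hat z\rangle$). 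No upper-semicontinuity estimate for the support function of $D_{\alpha_k/\theta}$ will rescue this, because the obstruction is the mismatch of the two limit vectors, not the regularity of the multifunction.

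The one idea your plan is missing is exactly how the paper closes this case: it exploits the fact that $x^{i_k}$ and the rejected point $\tilde x^{i_k}=x^{i_k}+(\alpha_{i_k}/\theta)(z^{i_k}-x^{i_k})$ converge to the \emph{same} point $\hat x$, and — ``following the same idea as in the proof of Lemma~\ref{continu}'' — constructs \emph{matched} subgradient sequences $\tilde{\tilde v}^{i_k}\in\partial\Psi^f(\tilde x^{i_k})$ and $\tilde u^{i_k}\in\partial\Psi^f(x^{i_k})$ with $\|\tilde{\tilde v}^{i_k}-\tilde u^{i_k}\|\to 0$, using that active points of both sequences accumulate at active points of $\hat x$ and that $\nabla\sigma^f_t(\va,\vb)$ is jointly continuous. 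Because the rejection inequality \eqref{nopass} holds for \emph{every} subgradient at $\tilde x^{i_k}$ and \emph{every} subgradient at $x^{i_k}$, it can be evaluated along this matched pair; in the limit both sides then carry the \emph{same} vector $\tilde u$, yielding $\langle\tilde u,\hat x-\hat z\rangle\le\delta\langle\tilde u,\hat x-\hat z\rangle$, which is precisely \eqref{imply}, whence $(1-\delta)\langle\tilde u,\hat x-\hat z\rangle\le 0$ and the chain of inequalities from Proposition~\ref{linewell} (via Fact~\ref{proj}) forces $\hat z=\hat x$, i.e.\ $\hat x\in S_*$. Without this asymptotic matching of subgradients at two sequences sharing a common limit, your Case~2 does not reduce to Case~1 and the theorem is not proved; outer semicontinuity alone, which is all Lemma~\ref{continu} directly gives you, is insufficient.
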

\begin{proof}
For the sake of brevity, we will not reproduce the full proof given in \cite{be-mi-pa}. Instead we will specify whenever the proofs are different due to the operator, in our case being point-to-set, and point out that we consider all elements on the normal cone of $C$ as zero. 

Consider a subsequence $(i_k)_{k\in\NN}$ such that all sequences involved in the algorithm be convergent, i.e., $(a^{i_k},b^{i_k})\to (\hat{a},\hat{b})$, $(u_a^{\alpha_{i_k}},u_b^{\alpha_{i_k}})\to (\tilde{u}_a,\tilde{u}_b)$ and $(u_a^{i_k},u_b^{i_k})\to (\hat{u}_a,\hat{u}_b)$. It is possible because all sequences are bounded.\\
 Using \eqref{limite} and the same criteria used in \cite{be-mi-pa}, we have that
 
\begin{equation}
\lim_{k\to\infty}\alpha_{i_k}\|(a^{i_k},b^{i_k})-(z_a^{i_k},z_b^{i_k})\|=0,\label{gotozero}
\end{equation}
 and here as in~\cite{be-mi-pa}, we consider two cases:  $\alpha_{i_k}\to \hat{\alpha}> 0$ or $\alpha_{i_k}\to 0$. \\
 
\textbf{ Case 1:} $\lim_{k\in\NN}\alpha_{i_k}=\hat{\alpha}>0$. Due to Lemma \ref{continu} we have that, $$\lim_{k\to\infty}(u_a^{\alpha_{i_k}},u_b^{\alpha_{i_k}})=(\tilde{u}_a,\tilde{u}_b)\in \partial\Psi^f(\hat{a},\hat{b}).$$ By the continuity of the projection mapping and \eqref{gotozero}, $(\hat{a},\hat{b})=P_C\left((\hat{a},\hat{b})-(\tilde{u,}_a\tilde{u}_b)\right)$, which implies that $(\hat{a},\hat{b})\in S_*$. 

\textbf{ Case 2:} $\lim_{k\to\infty}\alpha_{i_k}=0$. Defining $\tilde{\alpha}_{i_k}=\frac{\alpha_{i_k}}{\theta}$, then $\tilde{\alpha}_{i_k}\to 0$, let $(\tilde{a}^{i_k},\tilde{b}^{i_k})=\tilde{\alpha}_{i_k}(z_a^{i_k},z_b^{i_k})+(1-\tilde{\alpha}_{i_k})(a^{i_k},b^{i_k})$ then $(\tilde{a}_{i_k},\tilde{b}^{i_k})\to (\hat{a},\hat{b})$. Due to the \textbf{ Linesearch F}, we have that for all $(\tilde{v}_a^{i_k},\tilde{v}_b^{i_k})\in\partial\Psi^f(\tilde{a}^{i_k},\tilde{b}^{i_k})$ and all $(u_a^{i_k},u_b^{i_k})\in\partial\Psi^f(a^{i_k},b^{i_k})$ and $k\in\NN$ we have

\begin{equation}\label{nopass}
\langle(\tilde{v}_a^{i_k},\tilde{v}_b^{i_k}),(a^{i_k},b^{i_k})-(z_a^{i_k},z_b^{i_k})\rangle<\delta\langle (u_a^{i_k},u_b^{i_k}), (a^{i_k},b^{i_k})-(z_a^{i_k},z_b^{i_k})\rangle.
\end{equation}

Since $\tilde{\alpha}_{i_k}$ converges to zero and following the same idea as in the proof of Lemma~\ref{continu}, there exist sequences $(\tilde{\tilde{v}}_{a}^{i_k},\tilde{\tilde{v}}_{b}^{i_k})\in \partial \Psi^f(\tilde{a}^{i_k},\tilde{b}^{i_k})$ and $(\tilde{u}_a^{i_k},\tilde{u}_b^{i_k})\in \partial\Psi^f(a^{i_k},b^{i_k})$ such that $\lim_{k\to \infty}\|(\tilde{\tilde{v}}_{a}^{i_k},\tilde{\tilde{v}}_{b}^{i_k})-(\tilde{u}_a^{i_k},\tilde{u}_b^{i_k})\|=0$. Taking a convergent subsequence, we have $(\tilde{\tilde{v}}_{a}^{i_k},\tilde{\tilde{v}}_{b}^{i_k})\to (\tilde{u}_a,\tilde{u}_b)$ and $(\tilde{u}_a^{i_k},\tilde{u}_b^{i_k})\to (\tilde{u}_a,\tilde{u}_b)$ as well, also $(a^{i_k},b^{i_k})\to (\hat{a},\hat{b})$ and $(z_a^{i_k},z_b^{i_k})\to (\hat{z}_a,\hat{b}_b)=P_{C}\left((\hat{a},\hat{b})-(\tilde{u}_a,\tilde{u}_b)\right)$. Then passing to the limits on equation \eqref{nopass} we obtain
$$
\langle (\tilde{u}_a,\tilde{u}_b), (\hat{a},\hat{b})-(\hat{z}_a,\hat{z}_b)\rangle\leq \delta \langle (\tilde{u}_a,\tilde{u}_b), (\hat{a},\hat{b})-(\hat{z}_a,\hat{z}_b)\rangle.
$$

The above equation is the same as in \eqref{imply}, the proof continuous as in Proposition \ref{linewell}, getting that the limits of $(a^{i_k},b^{i_k})$ is a solution of the problem.

Now as the sequence is Fej\'er convergent to the solution set, following Proposition 4.5 (i) in \cite{be-mi-pa}, we get the convergence to the solution set of the whole sequence. 

\end{proof}
\subsubsection{Analysis convergence of $\mcF_{\rm 2}$}

For the case of $\mcF_{\rm 2}$ all the proof are the same as was proved in \cite{be-mi-pa}. 
\section{General Algorithm for non-monotone Variational Inequality}
The {\bf Algorithm F} and the {\bf Linesearch F} can be applied to more general problem. In fact, consider $T:\RR^n\rightrightarrows\RR^n$, and the convex and closed set $C\subseteq \RR^n$. With the following conditions, the {\bf Algorithm G}({\bf G} for ``general'') below, is convergent. 
\begin{itemize}
\item [A1)] $T$ is closed.
\item [A2)] $T$ is bounded on bounded sets. 
\item[A3)] Problem \eqref{dualvip} and \eqref{vip} are equivalents. That is, $S^*=S_0$.
\end{itemize}
By closed we mean that the graph of $T$ is closed. A2 is a classical assumption. A3 is weaker than pseudo-monotone, see examples in \cite{bura-mill}. \\
 Following we present the general version for the {\bf Linesearch F} and {\bf Algorithm F}. \\
 
\begin{algorithm}[H]
 \DontPrintSemicolon
 \KwIn{$x\in C$, $\beta>0$ and $\dd\in(0,1)$}
  Set $\alpha\leftarrow 1$ and $\theta \in (0,1)$. \;
   \ForAll{ $u\in T(x)$, Define $z=P_C(x-\beta u)$.}
   {
 \uIf {for each $u\in T(x)$ \;
 $$\max_{u^{\alpha}\in T(x_\alpha)}\langle u^{\alpha},x-z\rangle < \delta \langle u, x-z\rangle,$$}

  where $x_{\alpha}:= \alpha z+(1-\alpha)x,$
 $\alpha\leftarrow\theta \alpha$\;
 \Else{Stop and choose $u^{\alpha}\in T(x_{\alpha})$ such that:
 $\langle u^{\alpha},x-z\rangle\geq\langle u,x-z\rangle$}
 
  }
 
 \KwOut{$(\alpha,u^{\alpha})$}
 \caption{\textbf{LineSearch G:} General lineasearch}\label{gene}
 \end{algorithm}

\begin{algorithm}[H]\DontPrintSemicolon
\KwIn{$(\beta_k)_{k\in \NN}\subset[\check{\beta},\hat{\beta}]$ such that $0<\check{\beta}\le \hat{\beta}<+\infty$ and $\dd\in(0,1)$.}
\textbf{Initialization:} Take $(x^0)\in C$ and set $k\leftarrow 0$.\;

\textbf{Step~1: } Apply Algorithm~\ref{gene} to compute
\[
\left(\alpha_k,u^{\alpha_k}\right)= \textbf{ Linesearch  G}\big(x^k,\beta_k,\dd\big),
\]\;
 Set $z^k=P_C\left(x^k-\beta_k u^k)\right)$ then we have
\[
\left\langle u^{\alpha_k},x^k-z^k \rangle\geq\delta\langle u^k,x^k-z^k\right\rangle
\]
with $u^{\alpha_k} \in T(\bar{x}^k)$ and $\bar{x}^k=\alpha_k z^k+(1-\alpha_k)x^k$ .\;
\textbf{Step~2 (Stopping Criterion):} 
\lIf{$z^k=x^k$ or $x^k=P_C\left(x^k-v^k\right)$ with $v^k\in T(x^k)$,}{stop}
\textbf{Step 3:} Set
\begin{subequations}
\begin{align}
\bar{x}^k&:=\alpha_k z^k+(1-\alpha_k)x^k,\\
\text{and}\quad
x^{k+1}&:=\mcF\left(x^k\right);\label{fxk}
\end{align}
\end{subequations}
\textbf{Step~4:} If $x^{k+1}=x^k$, then stop. Otherwise, set $k\leftarrow k+1$ and go to \textbf{ Step~1}.\;
\caption{\textbf{Algorithm G}}\label{AG}
\end{algorithm}

Two variants can be consider for computing \eqref{fxk}.
\begin{align*}
\mcF_{\rm 1}(x^k) =& P_C\left(P_{H\left(\bar{x}^k,u^{\alpha_k}\right)}(x^k)\right); \, \,  \\
\mcF_{\rm 2}(x^k)=& P_{C\cap H(\bar{x}^k,u^{\alpha_k})}(x^k);\, \,  
\end{align*} 
where $u^{\alpha_k}\in T(\bar{x}^k)$ and 

\begin{equation*}
H(x,u):=\left\{ y\in \RR^{n} : \langle u,y-x\rangle\le 0\right\}.
\end{equation*}

The differences between the {\bf Algorithm G} and the {\bf Conceptual Algorithm F} in \cite{be-mi-pa} are listed bellow:
\begin{itemize}
\item We consider null normal vectors. 
\item We consider point-to-set operators instead of point-to-point. 
\item The {\bf Linesearch G} is different to {\bf Linesearch F}.
 \end{itemize}
  
\section{Numerical Experiment}

This section is dedicated to showing some numerical experiments to demonstrate the development of the \textbf{ Algorithm F} to approximate continuous functions. The section is split into two parts. The first part is dedicated to the "classical approximation" (using rational functions with polynomials) we split this section onto two, first, we approximate function for which we don't know the solution sets and the compact set $I=[-1,1]$, and after, we approximate function when we know the solution sets, which allow to measure the distance to the solution set, and considering the compact set $I$ an equidistant points onto the interval $[-1,1]$.   . In the second part, we uses non-polynomials rational functions and a finite collection of equidistant points onto the interval $[-1,1]$ as the compact set $I$. We compare different kind of approximation rational functions.  

\subsection{Rational Approximation with Polynomial}
In this subsection we consider the Problem \eqref{problem} with $p\in\Pi_n$ and $q\in \Pi_m$, polynomials of degree $n$ and $m$ respectively. 
\subsubsection{Unknown solution over an interval}

In these examples, we choose the best results after 200 iterations. In all cases, the interval $I=[-1,1]$, $\Psi^f_i$ with $i=1,2$ denotes the function value for the Variants 1 and 2 respectively, {\it iter} is the iteration number in which was attained the best result. $n, m$ are the degree of the numerator and denominator polynomials respectively. In the pictures below, we specified the function and the degrees in each cases, in all pictures the blue color is for the Variant 1, the red color for Variant 2. The green color in figures 2,3,4 and 5, is the graph of the function $f$ which is the function to be approximated.

\begin{table}[ht]
\begin{tabular}{|cc|cc|cc|}  \hline 
 & \multicolumn{3}{c}{\textbf{ Algorithm F} for unknown solution} & & \\    \hline  
$f(t)$ & $(n,m)$ & $Iter$ & $\Psi^f_1(a,b)$ & $Iter$& $\Psi^f_2(a,b)$  \\   \hline
$|t|$ & $(2,2)$                                      & 161 & 0.068 &186 & 0.070  \\
$|t|$  & $(3,3)$                                           & 193  & 0.072 & 191 & 0.069  \\
$|t|$ & $(4,3)$  &198 &0.069 &198 & 0.069\\ \hline
$\sin(t)$ & $(2,2)$                                        & 186  & 0.0097  & 186 &0.014   \\
$|\sin(t)|$                                          & $(3,3)$& 189  & 0.0714  & 193& 0.0700  \\ 
$\sqrt{|t|}$ & $(4,4)$  & 193 &0.186 & 185 & 0.182   \\  \hline
\end{tabular}
\end{table}
  \parbox{\linewidth}{\centering
     \includegraphics[width=1.2\textwidth]{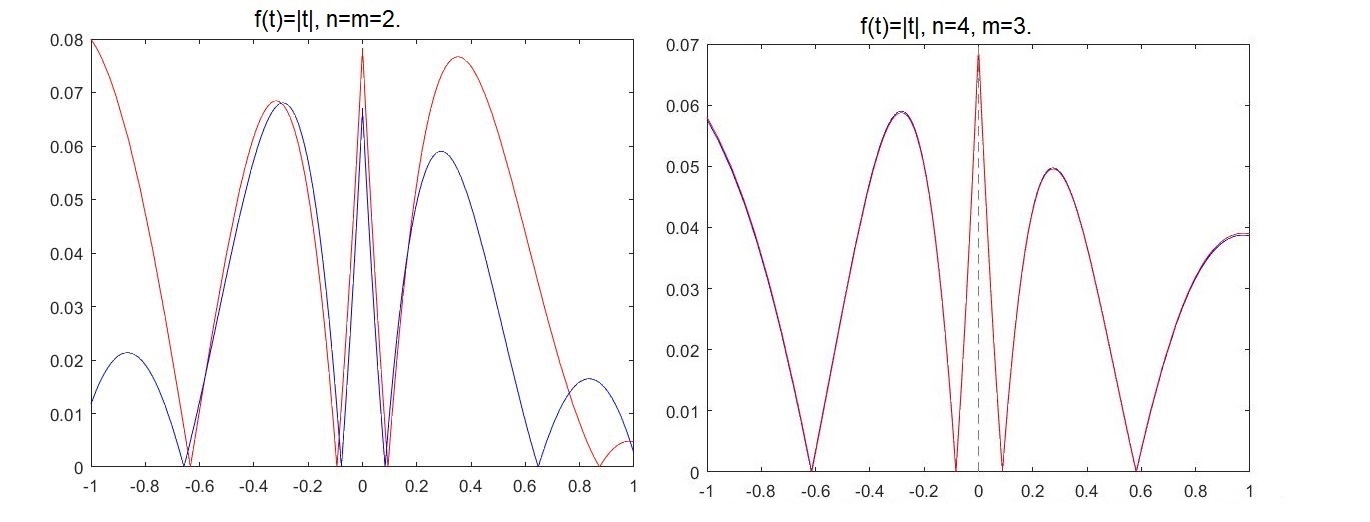} 
   \\
        \captionof{figure}{In both sides the objective function $\Psi_i^f$, $i=1,2$.}}
    
\parbox{\linewidth}{\centering
\includegraphics[width=1.2\textwidth]{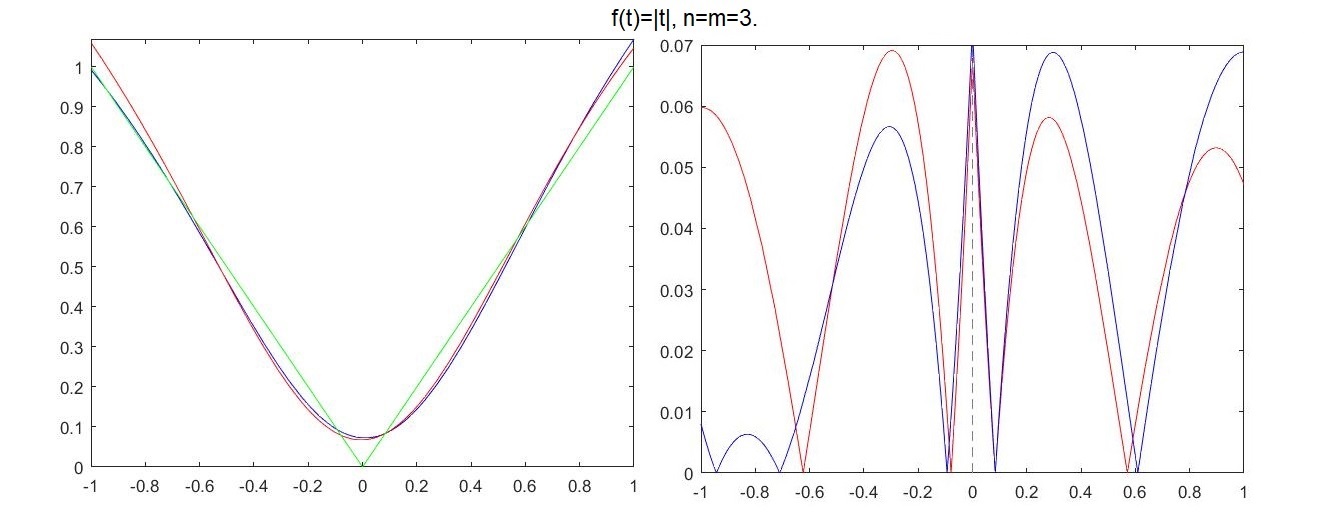} 

\captionof{figure}{\textbf{ Left:} The graph of $f$ and $p/q$. \textbf{ Right:} Objective function $\Psi^f_i$, $i=1,2$.}}

\parbox{\linewidth}{\centering
\includegraphics[width=1.2\textwidth]{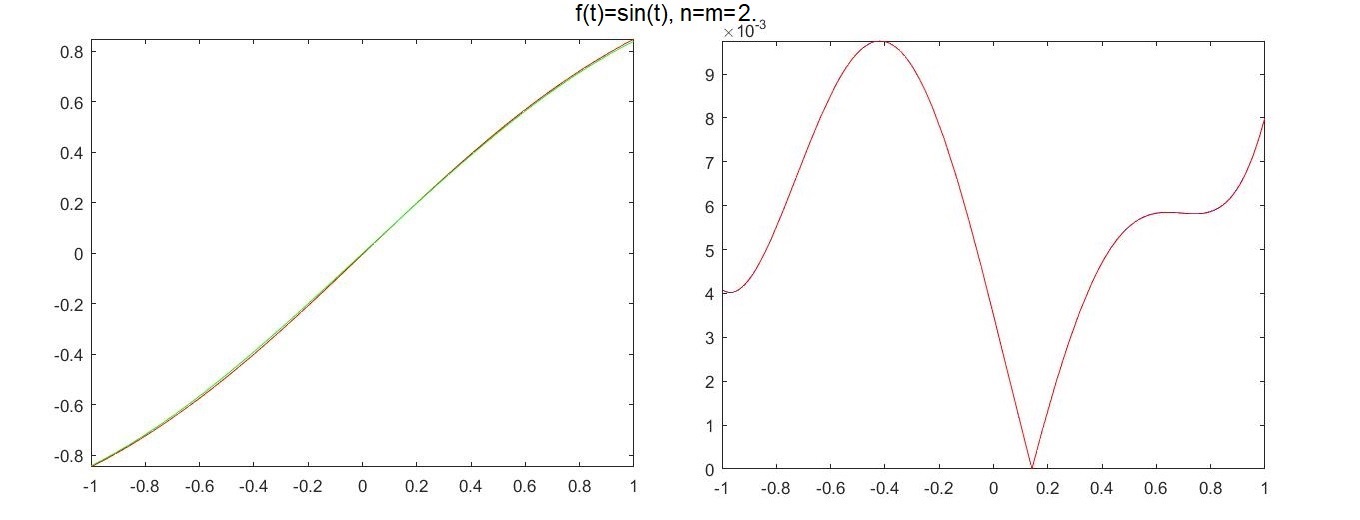} 
\captionof{figure}{\textbf{ Left:} The graph of $f$ and $p/q$. \textbf{ Right:} Objective function $\Psi^f_i$, $i=1,2$.}}

\parbox{\linewidth}{\centering
\includegraphics[width=1.2\textwidth]{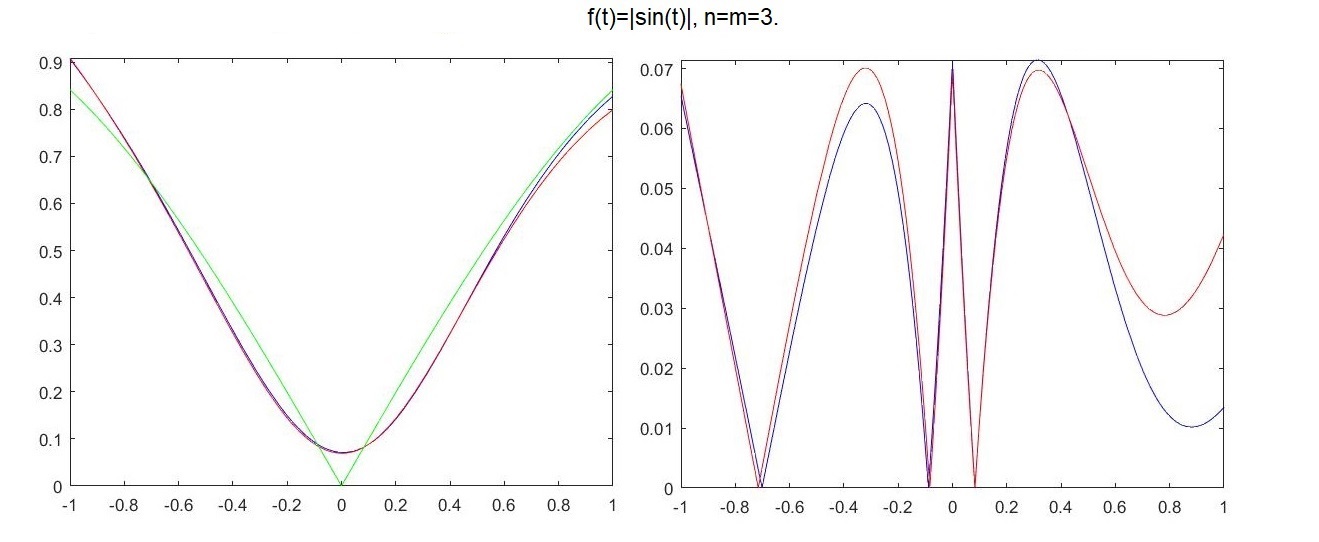} 

\captionof{figure}{\textbf{ Left:} The graph of $f$ and $p/q$. \textbf{ Right:} Objective function $\Psi^f_i$, $i=1,2$.}}

\parbox{\linewidth}{\centering
\includegraphics[width=1.2\textwidth]{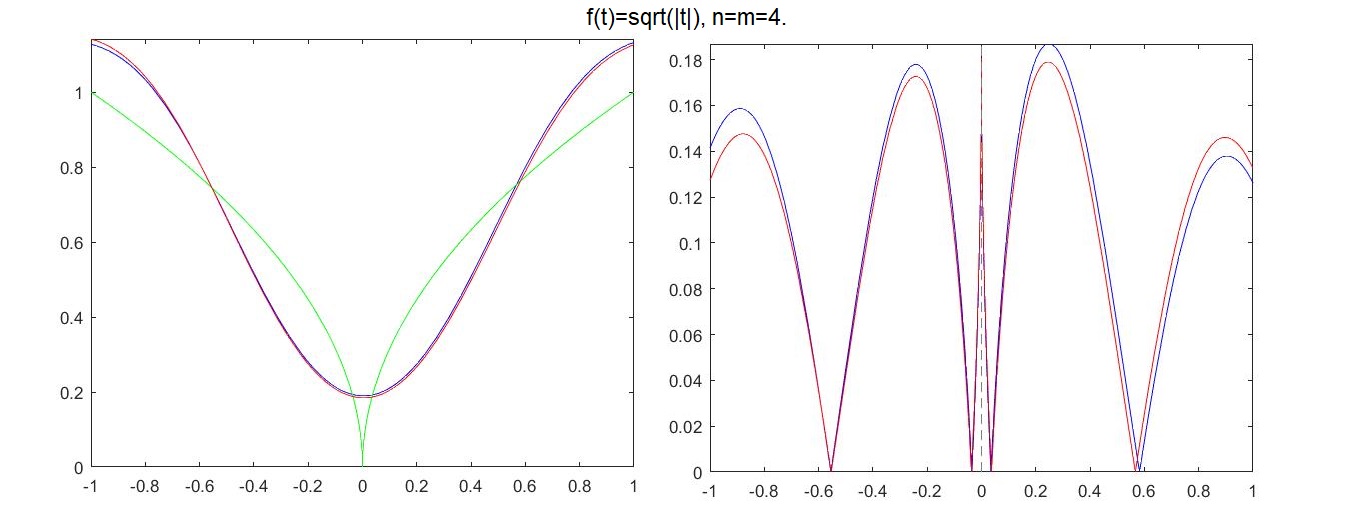} 
\captionof{figure}{\textbf{ Left:} The graph of $f$ and $p/q$. \textbf{ Right:} Objective function $\Psi^f_i$, $i=1,2$.}}

\subsubsection{Known solution over discrete compact set}

In this subsection we testing rational functions as the objective function. Then we know the solution sets. We denote by {\it iter} the number of iterations, $d_i(x,S^*)$ denotes the distance between the last point to the solution set for the variant $i$. We stopped the algorithm when the function value $\Psi^f_i$ at the current point be less or equal to $10^{-3}$.  For the compact set $I$, we used a collection of $M$ equidistant points on the interval $[-1,1]$. 

\begin{table}[ht]
\begin{tabular}{|ccc|cc|cc|}  \hline 
 & \multicolumn{3}{c}{\textbf{ Algorithm F} for known solution}    \\   \hline  
$f(t)$ & $(n,m)$ & M & $Iter$ & $d_1(x,S^*)$ & $Iter$& $d_2(x,S^*)$  \\   \hline
$1$ & $(1,1)$& 100  & 12 & $9.746*10^{-4}$ &12 & $9.87*10^{-4}$ \\ 
$1$  & $(2,2)$& 200 & 455 &0.004 & 381 & 0.0026  \\ \hline
$\frac{1}{t^2+1}$ & $(1,2)$ &100 &423 &0.0097 &135 & 0.0042\\ 
$\frac{1}{t^2+1}$ & $(2,2)$ &200 &4406 &0.0977 &3950 & 0.0972\\ \hline
$\frac{t}{t+1.5}$ & $(1,1)$& 100 & 4240 & 0.013  & 3137 &0.0069   \\ 
$\frac{t}{t+1.5}$ & $(2,2)$& 200 & 6490  &  0.0172  & 5875 &0.009   \\ \hline
$\frac{t^2-1}{t+2}$ & $(2,2)$ & 200 &2361  & 0.02 & 1730& 0.0437  \\ 
 $\frac{t^2-1}{t+2}$ & $(3,2)$ & 100 &14643  & 0.0901 & 6306& 0.0396  \\ \hline
\end{tabular}
\end{table}

\subsection{Non-polynomial rational approximation}
In this section we consider different rational function to approximate continuous functions. We denote by {\it CPU} the CPU time. By $h(t)$, we denote the functions which compose the rational function, i.e., $p(t)=\langle{\bf a},{\bf H}(t)\rangle=\sum_{k=0}^{n}a_k h^k(t)$. For this subsection we consider as the objective function the continuous function $f(t)=\frac{\sin t-\cos t}{t+2}$. The compact set in all cases are the collection of $M$ equidistant points in the interval $[-1,1]$. The used stopping criteria is based on the value of the objective function $\Psi_i$, with $i=1,2$ for the two variants. We used $\Psi_i({\bf a},{\bf b})\leq 10^{-2}$. The propose of this examples is compare both variants in the number of iterations and the CPU time. 
\begin{table}[ht]
\begin{tabular}{|ccc|cc|cc|}  \hline 
 & \multicolumn{3}{c}{\textbf{ Algorithm F} with non-polynomial}    \\   \hline  
$h (t)$ & $(n,m)$ & M & $Iter_1$ & $CPU_1$ & $Iter_2$& $CPU_2$  \\   \hline
$e^t$ & $(3,3)$& 20  & 625 & $16.9063$ &410 & $11.3594$ \\ 
$e^t$  & $(3,3)$& 100 & 585 &21.1875 & 569 & 23.4063 \\ 
$e^t$ & $(5,4)$ &200 &2115 &420.531 &2094 &419.281\\ 
$e^t$ & $(10,8)$ &100 &5121 &997.875 &5099 & 940.172\\ \hline
$\sin(t)$ & $(3,3)$& 20 & 255& 12.0313 & 255 &11.4513   \\ 
$\sin(t)$ & $(3,3)$& 100 &429  &  44.6094  & 311 & 35.3906   \\ 
$\sin(t)$ & $(5,4)$ & 200 & 235  &52.1094 & 184& 37.4688  \\ 
 $\sin(t)$ & $(10,8)$ & 100 &136  & 23.9531 & 214& 20.9688  \\ \hline
\end{tabular}
\end{table}

\section{Conclusion}
 In this paper a new Algorithm, containing two variants, for point-to-set operators without continuity and monotonicity is proposed. We proved that the Clarke subdifferential of the maximal deviation function is a pseudomonotone operator, which is a better than the known quasimonotonicity. An application to solve Rational Approximation problems are tested with different rational functions, and the algorithm obtain a good behaviour and comparing both Variants, the best results in general was obtained by the {\bf Variant 2}, which project onto a small set and just once. 

In this paper we also prove that the corresponding objective function in the case of generalised rational approximation are pseudo-convex and therefore it opens a broad avenue for further investigating. In particular, we can apply general methods developed for pseudo-convex functions to generalised rational approximation. Another potential research direction is to investigate the objective functions and optimisation problems in the case of multivariate approximation.

\section*{Acknowledgement}
This research was supported by the Australian Research Council (ARC),  Solving hard Chebyshev approximation problems through nonsmooth analysis (Discovery Project DP180100602).

\bibliographystyle{plain}

\end{document}